\definecolor{clUK}{RGB}{198,26,39}
\definecolor{clUKfb1}{RGB}{221,117,34}
\definecolor{clUKfb2}{RGB}{121,51,122}
\definecolor{clUKfb3}{RGB}{0,151,138}
\definecolor{clUKfb4}{RGB}{0,155,214}
\colorlet{clUK}{gray}
\colorlet{clUKfb1}{gray}
\colorlet{clUKfb2}{gray}
\colorlet{clUKfb3}{gray}
\colorlet{clUKfb4}{gray}
\pgfplotsset{compat=1.17}
\theoremstyle{plain}
\newtheorem{theorem}{Theorem}[section]
\newtheorem{lemma}[theorem]{Lemma}
\theoremstyle{definition}
\newtheorem{definition}[theorem]{Definition}
\theoremstyle{remark}
\newtheorem{remark}[theorem]{Remark}
\DeclareMathOperator{\Span}{span}
\DeclareMathOperator{\POD}{POD}
\DeclareMathOperator{\eff}{eff}
\newcommand{\RR}{\mathbb{R}}
\newcommand{\RBestFineABS}{\eta_\star^\text{abs}}
\newcommand{\RBestFineREL}{\eta_\star^\text{rel}}
\newcommand{\RBestCcABS}{\eta_\mathfrak{c}^\text{abs}}
\newcommand{\RBestCcREL}{\eta_\mathfrak{c}^\text{rel}}
\begin{document}

\title{A reduced basis method for parabolic PDEs based on a space-time least squares formulation}

\author{
Michael Hinze%
\footnote{Mathematical Institute, University of Koblenz, Germany}
\orcidlink{0000-0001-9688-0150},\,%
Christian Kahle%
\footnotemark[1]
\orcidlink{0000-0002-3514-5512},\,%
Michael Stahl%
\footnotemark[1]
\orcidlink{0009-0000-4692-7328}
}

\date{\today}

\maketitle

\begin{abstract}
In this work, we present a POD-greedy reduced basis method for parabolic partial differential equations (PDEs), based on the least squares space-time formulation proposed in \cite{preprint_SP} that assumes only minimal regularity. We extend this approach to the parameter-dependent case. The corresponding variational formulation then is based on a parameter-dependent, symmetric, uniformly coercive, and continuous bilinear form. We apply the reduced basis method to this formulation, following the well-developed techniques for parameterized coercive problems, as seen e.g.~in reduced basis methods for parameterized elliptic PDEs. We present an offline–online decomposition and provide certification with absolute and relative error bounds. The performance of the method is demonstrated using selected numerical examples.
\end{abstract}

\section{Introduction.}
Let $V$ denote a real vector space and $a(\mu;\cdot,\cdot): V\times V \rightarrow \mathbb{R}$ a family of inner products such that $V(\mu):= (V,a(\mu;\cdot,\cdot))$ form Hilbert spaces. Here, $\mu \in\mathcal P$, where $\mathcal P$ denotes a parameter set. Furthermore, let $A(\mu) : V(\mu) \rightarrow V^\star(\mu)$ denote the Riesz isomorphism associated with $a(\mu;\cdot,\cdot)$, i.e.
\begin{align}\label{bilinearform}
    \langle A(\mu) \cdot, \cdot \rangle_{V^\star(\mu), V(\mu)} := a(\mu; \cdot, \cdot).
\end{align}
We develop a certified reduced basis method for parametrized parabolic equations of the form
\begin{equation}\label{APDE}
y_t+A(\mu)y=f(\mu) \text{ in } L^2(0,T;V^\star(\mu)), \quad y(0)= y_0(\mu) \text{ in } H,
\end{equation}
where $(V(\mu),H,V^\star(\mu))$ for every $\mu \in \mathcal P$ forms a Gelfand triple and $f(\mu) \in L^2(0,T; V^\star(\mu))$.

To derive a variational formulation for \eqref{APDE} we build upon the least squares space-time approach of \cite{preprint_SP}, which under natural regularity assumptions on the data leads to a variational formulation of the form
\begin{align}
    b(\mu; y, w) = l(\mu; w) \qquad \forall w \in W^\mu(0,T),
\end{align}
with continuous, symmetric and uniformly coercive bilinear forms $b(\mu;\cdot,\cdot)$, where $W^\mu(0,T) := \lbrace v \in L^2(0,T;V(\mu)) \mid v_t \in L^2(0,T;V(\mu)^\star)\rbrace$. For details we refer to Section \ref{sec:setting}. 
This establishes a space-time reduced basis framework based on the inner product induced by $b$, which inherits many advantages from the classical elliptic setting.
We focus on a POD-greedy method in the given natural space-time norm, demonstrating its performance with absolute and relative error bounds through two numerical examples.

\paragraph*{Novelty statement:}
\begin{itemize}
    \item[$\blacktriangleright$] Model order reduction for a least squares space-time formulation with natural $W(0,T)$ regularity of parameter-dependent parabolic PDEs (Section \ref{sec:setting} and \ref{sec:RB}),
    \item[$\blacktriangleright$] Reduced basis (absolute and relative) error estimators in a discrete $W(0,T)$ norm that allow an efficient offline-online decomposition (Section \ref{subsec:RBCert}),
    \item[$\blacktriangleright$] POD-greedy approach for the construction of reduced basis spaces in this context (Section \ref{sec:RB}).
\end{itemize}

\paragraph*{Literature.} The reduced basis method is well established for problems with a variational formulation involving a uniformly coercive and continuous bilinear form. This is the case, for example, in the setting of elliptic problems. For an overview of standard model order reduction techniques, see references \cites{Hesthaven2016}{haasdonk2017reduced}{GräßleHinzeVolkwein+2021+47+96}{Hesthaven_Pagliantini_Rozza_2022}{Hinze2023RB}{Quarteroni2016}. Of particular interest to us is the computational performance gain through \textit{offline-online decomposition} and the construction of best approximations by \textit{POD}.\\

Parabolic equations are often treated with time-stepping methods. For example, the backward Euler--Galerkin method is used to solve 3D problems in \cite{Gelsomino01082011}. In this method, the time-dependent partial differential equation (PDE) is solved by computing the solutions to a sequence of time-independent problems. In the parameter-dependent case, this allows the efficient application of reduced basis methods; however, the known error estimators need to work with the full time grid. Error estimators exist for the state at a given time point \cites[Prop.~2.80]{haasdonk2017reduced}[§~6.1.3]{Hesthaven2016}, as well as for some space-time energy norm \cites[Prop.~4.1]{greplpatera}[Prop.~2.82]{haasdonk2017reduced}. The latter is realized in \cite{greplpatera} by treating time as an additional parameter. Combining these error estimators with a POD-greedy approach establishes an exponential decay of the Kolmogorov N-width, and hence convergence of the reduced basis method. In \cite[Prop.~4.4]{haasdonkCONVrb} this is achieved by applying POD to the \textit{flat set}, which is the union of all local (time-independent) steps of the high-fidelity solutions. In \cite{greplpatera}, the authors also use a greedy algorithm and demonstrate its performance on a heat shield example. \\

In recent years, there has been a growing interest in the use of space-time formulations for reduced basis methods. In \cite{rovas2006reduced}, reduced-basis output bound methods are extended from the elliptic case to parabolic partial differential equations by treating time as an additional parameter, in a manner similar to \cite{greplpatera}. In this study, the authors employ a space-time formulation and solve the high-fidelity problems numerically using the discontinuous Galerkin method in time. As a numerical example, they consider a \textit{thermal fin} problem.

In \cite{eftang2011hp}, the reduced basis approach is employed within an \textit{hp}  framework for linear and nonlinear parabolic problems. The parameter domain is split into subdomains for which individual RB spaces are constructed. The POD method is used in time and a greedy algorithm is used with respect to the parameters. The high-fidelity problems are solved using a backward Euler and Crank--Nicolson scheme. Error estimators are also provided in the aforementioned spatio-temporal energy norm.

Fully space-time reduced basis methods for parabolic problems are introduced in \cites{urban2012new}{urban2014improved}. In these, the variational formulation contains a bilinear form which is non-symmetric in time and satisfies an inf-sup condition. Linear systems are obtained using a Petrov--Galerkin approximation. The reduced basis space is constructed with respect to the spatial solutions, providing an error estimator in a discrete norm arising from the Riesz lift. This approach is applied to option pricing in \cite{Mayerhofer2016}.

A similar variational formulation in space-time is employed for the numerical treatment of time-periodic problems in \cite{STEIH2012710}. There, the authors use basis functions that are periodic in time, realized through wavelets. They also provide \textit{a posteriori} error estimators for the reduced basis approach.

For an overview of reduced basis methods, particularly in the context of space-time methods for parabolic problems, see \cite[§~1.4.2]{Hinze2023RB}.
In this work, we also employ the concept of a discrete $W(0, T)$ norm from \cite{urban2014improved}, with the aim of introducing a reduced basis approach for the full $W(0, T)$ using a symmetric space-time formulation.\\

There are also contributions investigating certified reduced basis methods for nonlinear, time-dependent equations. In \cite{yano2014space}, a certified reduced basis method for the Boussinesq system is proposed. Quasilinear parabolic equations are investigated in \cite{HK21} and \cite{hoppe2024posteriori}. Nonsmooth semilinear parabolic equations are considered in \cite{bernreuther2024adaptive}. A general framework for nonlinear parabolic equations with empirical interpolation is presented in \cite{benaceur2018progressive}.\\

The convergence of reduced basis methods is measured by the Kolmogorov N-width, which quantifies the worst best-approximation that appears in a given parameter set. 
In \cite[Thm.~3.1]{ohlberger}, a general proof of exponential convergence is provided in the case where the variational formulation involves a bilinear form that is uniformly coercive, continuous, and allows a parameter decomposition. We also refer to \cite{unger2019kolmogorov} in the context of model reduction for \textit{LTI systems} and the recent work \cite{arbes2025kolmogorov} investigating the Kolmogorov $N$-width for linear transport problems.\\

In \cite{preprint_SP}, we introduced a least squares space-time formulation for parabolic problems with natural regularity. Our reduced basis approach is based on this. The bilinear form in the variational formulation is symmetric, uniformly coercive and continuous.
For numerical treatment, the problem is reformulated as an equivalent saddle-point equation, and it has been shown that the Galerkin approximation converges to the continuous solution.

\paragraph*{Outline.} In Section \ref{sec:setting}, we describe the problem setting in the parameter-dependent case and present the least squares formulation, as well as an equivalent parameterized saddle point problem. In Section \ref{sec:FE}, we introduce a space-time discretization using tensorial space-time finite elements. This makes it possible to state the high-fidelity problem. The reduced problem is formulated in Section \ref{sec:RB}, where we also introduce the concept of space-time POD. In Section \ref{subsec:RBCert}, we derive absolute and relative error estimators and demonstrate their performance using two numerical examples in Section \ref{subsec:numericsRB}.

\section{Problem setting.}\label{sec:setting}
Let $(V,(\cdot,\cdot)_{V})$ and $(H,(\cdot,\cdot)_H)$ 
denote separable Hilbert spaces with the properties 
$V \hookrightarrow H\equiv H^\star \hookrightarrow V^\star$, so that $(V,H,V^\star)$ forms a Gelfand triple. Let $\mathcal{P}$ denote some parameter space and $\mu \in \mathcal{P}$ some arbitrary, but fixed parameter. We denote with $a(\mu; \cdot, \cdot) : V \times V \rightarrow \RR$ a parameter-dependent, symmetric, uniformly continuous and coercive bilinear form, defining an inner product on $V$. With this we introduce the Hilbert space $V(\mu)$ as the space $V$ equipped with the inner product $a(\mu; \cdot, \cdot)$,
\begin{align}\label{eq:RBbilinA}
    (u, v)_{V(\mu)} := a(\mu; u, v) \qquad \forall u, v \in V .
\end{align}
By these assumptions, $(V(\mu), (\cdot, \cdot)_{V(\mu)})$ is a family of parameter-dependent separable Hilbert spaces with the property
\begin{align}
    V(\mu) \hookrightarrow H \equiv H^\star \hookrightarrow V(\mu)^\star \qquad \forall \mu \in \mathcal{P},
\end{align}
so that $(V(\mu),H,V(\mu)^\star)$ is a family of parameter-dependent Gelfand triples.\\

We choose some reference parameter $\overline{\mu} \in \mathcal{P}$ to identify $V = V(\overline{\mu})$ and from here onwards assume that all inner products $a(\mu; \cdot, \cdot)$ are equivalent on $V$ in the way that $a(\mu; \cdot, \cdot)$ is uniformly coercive and continuous, i.e., there exist positive constants $c_s, c_c$ independent of $\mu \in \mathcal P$ such that
\begin{align} \label{eq:rbcoercivitycontconst}
\vert a(\mu;u,v)\vert \le c_s \|u\|_V\|v\|_V \quad \text{and} \quad a(\mu;u,u) \ge c_c \|u\|_V^2 \qquad \forall u,v\in V.    
\end{align}
Then the norms on $V$ and $V(\mu)$ as well as on $V^{\star}$ and $V(\mu)^{\star}$ are equivalent with the estimates
\begin{equation}\label{eq:abschaetzungVundVst}
 \sqrt{c_c}\|v\|_V \le \|v\|_{V(\mu)} \le \sqrt{c_s}\|v\|_V \qquad \text{and} \qquad \frac{1}{\sqrt{c_s}}\|v\|_{V^{\star}} \le \|v\|_{V(\mu)^{\star}} \le \frac{1}{\sqrt{c_c}}\|v\|_{V^{\star}}.
\end{equation}
\\
For given $T > 0$ and for any fixed $\mu \in \mathcal{P}$ we define the parabolic spaces
\begin{equation}
    W^{\mu}(0, T) := \{v\in L^2(0,T;V(\mu)), v_t\in L^2(0,T;V(\mu)^\star)\} \quad \text{and} \quad W(0,T) := W^{\overline{\mu}}(0, T).
\end{equation}
The inner product is given by 
\begin{equation}
    (u,v)_{W^{\mu}(0,T)} := \int_0^T (u_t,v_t)_{V(\mu)^\star} + \int_0^T(u,v)_{V(\mu)}
\end{equation}
and the norm induced by this inner product is denoted by $\|v\|_{W^{\mu}(0,T)} := (v,v)_{W^{\mu}(0,T)}^{1/2}$.
Using \eqref{eq:abschaetzungVundVst} we directly obtain the equivalence of $(\cdot, \cdot)_{W^{\mu}(0,T)}$ and $(\cdot, \cdot)_{W(0,T)}$ on $W(0,T)$ and consequently the equivalence of the induced norms,
\begin{equation} \label{eq:normsWequi}
    \sqrt{\min\left\lbrace c_c,\frac{1}{c_s}\right\rbrace }\|v\|_{W(0,T)} \le \|v\|_{W^{\mu}(0,T)} \le \sqrt{\max\left\lbrace c_s,\frac{1}{c_c} \right\rbrace }\|v\|_{W(0,T)} \quad \forall v\in W(0,T).
\end{equation}
Thus, considered as sets, the spaces $W^{\mu}(0,T)$ and $W(0,T)$ are equal.\\ 
With the given family of inner products we associate a family of parameter-dependent operators $A(\mu) : V \rightarrow V^{\star}$ via 
\begin{equation}
    \left\langle A(\mu)u, v\right\rangle_{V^{\star}, V} :=  a(\mu; u, v) \qquad \forall \mu \in \mathcal{P} \quad \forall u,v \in V .
\end{equation}
With this, $A(\mu) : V(\mu) \rightarrow V(\mu)^\star$ is the Riesz isomorphism. From here onwards we call $R(\mu) := A(\mu)^{-1} : V(\mu)^\star \rightarrow V(\mu)$ the Riesz lift, satisfying for any $\phi \in V^\star$
\begin{align}
    (R(\mu) \phi,v)_{V(\mu)} =  \langle \phi,v\rangle_{V^\star,V}  \qquad \forall  v \in V
\end{align}
and for $\phi,\psi \in V^\star$ we set
\begin{align}
    (\phi,\psi)_{V(\mu)^\star}:= (R(\mu) \phi, R(\mu) \psi)_{V}. 
\end{align}
By definition of $(\cdot,\cdot)_{V(\mu)}$ it follows that $A(\mu) R(\mu) \phi = \phi$ for all $\phi\in V^\star$ and 
$R(\mu) A(\mu) v = v$ for all $v \in V $, see \cite{Wloka1987,Troeltzsch2010}.\\

In this setting we naturally extend $R(\mu)$ from $V(\mu)^\star$ to $L^2(0,T;V(\mu)^\star)$ and understand $R(\mu)$ as operator $R(\mu) :L^2(0,T;V(\mu)^\star) \to L^2(0,T;V(\mu))$ and define $R(\mu)\phi$ for any $\phi \in L^2(0,T;V^\star)$ as $R(\mu)\phi(t)$ for almost every $t$. With this, $R(\mu)\phi$ fulfills
\begin{equation}
    \int_0^T (R(\mu)\phi, v)_{V(\mu)} = \int_0^T \langle \phi, v \rangle_{V^\star, V} \qquad \forall v \in V .
\end{equation}

Finally, assume that parameter-dependent $y_0(\mu) \in H$ and $f(\mu) \in L^2(0,T; V^\star)$ are given, where $f$ is assumed to be uniformly bounded  with respect to $\mu$ as well, i.e.~there exists $c_s^f > 0$ such that
\begin{align}\label{eq:cont_of_f}
    \vert\vert f(\mu) \vert\vert_{L^2(0,T;V^\star)} \leq c_s^f.
\end{align}

For fixed $\mu \in \mathcal{P}$ we solve the parameter-dependent parabolic problem
\begin{equation}\label{eq:parab_mu}\tag{$\dagger$}
    y_t + A(\mu)y = f(\mu) \text{ in } L^2(0,T;V^\star), \quad y(0)=y_0(\mu) \text{ in } H.
\end{equation}
Since $\mu \in \mathcal{P}$ is arbitrary but fixed, \eqref{eq:parab_mu} admits a unique variational solution $y(\mu) \in W(0,T)$, see e.g.~\cite{Wloka1987,Thomee2006}. In the following we omit writing the dependence on $\mu$ for $y$ whenever it is clear.

\begin{remark}
The standard setting we have in mind is the heat equation with homogeneous Dirichlet boundary data and parameter-dependent diffusion, source term and initial condition. In this application for an open and bounded domain $\Omega \subset \mathbb{R}^n$ we have $V= H^1_0(\Omega)$, $H=L^2(\Omega)$, $V^\star=H^{-1}(\Omega)$ and
\begin{equation}
    \langle A(\mu) y,v\rangle_{V^\star,V} = \int_\Omega \kappa(\mu) \nabla y \nabla v ,
\end{equation}
where $\kappa(\mu) \in L^\infty(\Omega)$ is a parameter-dependent diffusivity term.
\end{remark}

\subsection{A least squares space-time approach.}
For the variational formulation of \eqref{eq:parab_mu} we use the least-squares approach  introduced in  \cite{preprint_SP} and  extend it to the parameter-dependent case. For this, we consider the quadratic  minimization problem 
\begin{equation}\label{eq:minresid}
    \min_{v\in W(0,T)} \|r(\mu; v)\|^2_{{ L^2(0,T;V(\mu)^\star)}\times H} 
    := \|v_t+A(\mu)v-f(\mu)\|^2_{ L^2(0,T;V(\mu)^\star)}+\|v(0)-y_0(\mu)\|^2_H.
\end{equation}
Following \cite{preprint_SP} the necessary and sufficient first order optimality condition gives rise to a variational formulation, defined below.
\begin{definition}[Continuous problem]
We consider the following formulation to solve \eqref{eq:parab_mu}:\\
For fixed $\mu \in \mathcal{P}$, find $y \in W(0,T)$ such that
\begin{align}
    b(\mu; y,w) = l(\mu; w) \quad \forall w \in W(0,T),\label{prob:P_mu}\tag{$\ddagger$}
\end{align}
where for arbitrary $v,w\in W(0,T)$ the forms $b(\mu; \cdot, \cdot)$ and $l(\mu; \cdot)$ are defined as
\begin{align}
  b(\mu; v,w) &:= \int_0^T (v_t,w_t)_{V(\mu)^\star}+ \int_0^T(v,w)_{V(\mu)}  + (v(T),w(T))_H , \label{eq:b_def_mu}\\
    l(\mu; w)&:=(y_0(\mu),w(0))_H 
    + \int_0^T (f(\mu),w_t)_{V(\mu)^\star}
    + \int_0^T \langle f(\mu),w\rangle_{V^\star,V}. 
    \label{eq:l_def_mu}
\end{align}
\end{definition}
With the results in \cite{preprint_SP} together with the equivalency of norms \eqref{eq:normsWequi}, the uniform coercivity and continuity of $b(\mu; \cdot, \cdot)$ directly follows. Uniform continuity of $l(\mu; \cdot)$ follows with \eqref{eq:cont_of_f}. The well-posedness of formulation \eqref{prob:P_mu} for any fixed $\mu \in \mathcal P$ then is obtained with Lax--Milgram's theorem.

\subsection{Reformulation as a saddle point problem.}
For the numerical treatment it is convenient to reformulate \eqref{prob:P_mu} as a saddle point problem. For fixed $\mu \in \mathcal{P}$ we introduce $\hat{a}(\mu) : W(0,T) \times W(0,T) \to \RR$, $\hat{b} : W(0,T) \times L^2(0,T; V) \to \RR$, $\hat{c}(\mu) : L^2(0,T; V) \times L^2(0,T; V) \to \RR$, $\hat{l}_1(\mu) : W(0,T) \to \RR$ and $\hat{l}_2(\mu) : L^2(0,T; V) \to \RR$ by
\begin{align}
    \hat{a}(\mu; y,w) &= (y(T),w(T))_H  + \int_0^T(y,w)_{V(\mu)} ,\\
    \hat{b}(w,q) &=  \int_0^T\langle w_t,q\rangle_{V^\star,V} , \\
    \hat{c}(\mu; p,q) &= \int_0^T(p,q)_{V(\mu)} ,\\
    \hat{l}_1(\mu; w) &= (y_0(\mu),w(0))_H  + \int_0^T\langle f(\mu), w \rangle_{V^\star,V} ,\\
    \hat{l}_2(\mu; q) &= \int_0^T \langle f(\mu), q \rangle_{V^\star,V}.
\end{align}
For given $\mu \in \mathcal P$ the problem to solve then is given by finding $(y,p) \in W(0,T)\times L^2(0,T;V)$ such that for all $(w,q) \in W(0,T)\times L^2(0,T;V)$ there holds
\begin{equation}\label{eq:contSPorig}\tag{\ensuremath{\text P^\mu}}
\begin{aligned}
    \hat{a}(\mu; y,w) + \hat{b}(w,p) &= \hat{l}_1(\mu; w),\\
    \hat{b}(y,q) -\hat{c}(\mu; p,q) &= \hat{l}_2(\mu; q).
\end{aligned}    
\end{equation}
The equivalence of \eqref{eq:contSPorig} and \eqref{prob:P_mu} is shown in \cite[Lemma 2.6]{preprint_SP}. In particular, $y$ is the solution to \eqref{prob:P_mu} if $(y,p)$ solves \eqref{eq:contSPorig}.

\section{The high fidelity problem.}\label{sec:FE}
To obtain a high fidelity solution of the saddle point problem \eqref{eq:contSPorig} we introduce a Galerkin approximation. Let $I := (0,T]$. We define the finite dimensional spaces
\begin{align}
J_P &= \Span\{\psi_p\mid p=1,\ldots,P\} \subset L^2(I), \label{JP} \\
K_M &= \Span\{\chi_m\mid m=1,\ldots, M, (\chi_m)_t \in J_P\} \subset H^1(I), \label{KM} \\
V_N &= \Span\{\phi_n\mid n=1,\ldots,N\} \subset V. \label{VN}
\end{align}
With them we define the space-time spaces
\begin{align} 
    Q_d = J_P \otimes V_N &=
    \left\lbrace q_d(t,x)=\sum_{p=1}^P\sum_{n=1}^N q_n^p\psi_p(t)\phi_n(x), \;\; q_n^p \in \RR \; \forall n,p \right\rbrace \subset L^2(0,T;V), \label{eq:Qd}\\
    W_d = K_M \otimes V_N &= 
    \left\lbrace w_d(t,x)=\sum_{m=1}^M\sum_{n=1}^N w_n^m\chi_m(t)\phi_n(x), \;\; w_n^m \in \RR \; \forall n,m \right\rbrace \subset W(0,T), \label{eq:Wd}
\end{align}
that we use to approximate the solution $(y,p)$ to \eqref{eq:contSPorig}. For fixed $\mu \in \mathcal{P}$ the discrete version of the saddle point problem \eqref{eq:contSPorig} then is given by
seeking $y_d \in W_d$ and $p_d \in Q_d$ such that
\begin{equation}\tag{\ensuremath{\text P^\mu_\ensuremath{d}}}\label{eq:hfproblemSP}
\begin{aligned}
    \hat{a}(\mu; y_d,w_d) + \hat{b}(w_d,p_d) &= \hat{l}_1(\mu; w_d) \qquad &&\forall w_d \in W_d ,\\
    \hat{b}(y_d,q_d) -\hat{c}(\mu; p_d,q_d) &= \hat{l}_2(\mu; q_d) \qquad &&\forall q_d \in Q_d.
\end{aligned}    
\end{equation}
To assemble system \eqref{eq:hfproblemSP} we define $T_t, M_t \in \RR^{M \times M}$, $M_t^\psi \in \RR^{P \times P}$, $Z_t \in \RR^{P \times M}$ and
$A_x(\mu),M_x \in \RR^{N\times N}$ by
\begin{align}
    (T_t)_{i,j}&= \chi_j(T)\chi_i(T),
    & (M_t)_{i,j} &= \int_0^T \chi_j\chi_i, 
    & (Z_t)_{i,j}&= \int_0^T (\chi_j)_t \psi_i, \\
    (M_t^\psi)_{i,j} &= \int_0^T \psi_j\psi_i, 
    & (M_x)_{i,j} &= (\phi_j,\phi_i)_H,
    &(A_x(\mu))_{i,j} &= \langle A(\mu)\phi_j,\phi_i \rangle_{V^\star, V}.
\end{align}
In addition, we introduce $R_0^t \in \RR^{M}$, $R_0^x(\mu) \in \RR^{N}$, $F_1(\mu) \in \RR^{NM}$ and $F_2(\mu) \in \RR^{NP}$ according to
\begin{align}
    (R_0^t)_m &:= \chi_m(0),      
    &(F_1(\mu))_{(m-1)N+n} &:= \int_0^T \langle f(\mu),\chi_m\phi_n\rangle_{V^\star,V},\\
    (R_0^x(\mu))_n &:= (y_0(\mu),\phi_n)_H, 
    & (F_2(\mu))_{(m-1)N+n} &:= \int_0^T \langle f(\mu),\psi_m\phi_n\rangle_{V^\star,V}.
\end{align}
Finally, to compactify the notation let $S_d(\mu) \in \RR^{N(M + P) \times N(M + P)}$ and $s_d(\mu) \in \RR^{N(M + P)}$ defined by
\begin{equation}
    S_d(\mu) := \begin{pmatrix} T_t \otimes M_x + M_t \otimes A_x(\mu) & Z_t^T \otimes M_x \\
    Z_t \otimes M_x & -M_t^\psi \otimes A_x(\mu)
    \end{pmatrix},
    \qquad
    s_d(\mu) := \begin{pmatrix} R_0^t \otimes R_0^x(\mu) + F_1(\mu) \\ F_2(\mu)
    \end{pmatrix} .
\end{equation}
Here, $\otimes$ denotes the Kronecker product of two matrices. We refer to \cite[§~1.3.6]{Golub} for more details and stress that in $s_d(\mu)$ we also understand the vectors as matrices concerning $\otimes$. Problem \eqref{eq:hfproblemSP} then is realized by solving
\begin{equation}\label{eq:SP_RB_diskr}
    S_d(\mu) \begin{pmatrix} \Vec{y}\\ \Vec{p}\end{pmatrix} = s_d(\mu),
\end{equation}
where $\Vec{y} \in \RR^{NM}$ and $\Vec{p} \in \RR^{NP}$ denote the coefficient vectors of $y_d \in W_d$ and $p_d \in Q_d$. 
\begin{remark}\label{rem:stiffnessTime}
The convergence of the finite element approximation to the continuous solution is shown in \cite{preprint_SP}. The standard setting are piecewise linear and globally continuous finite elements (CG 1) for spanning $K_M$ and piecewise constant elements (DG 0) for $J_P$. 
\end{remark}

\paragraph*{Inner product and norm of $W_d$.} To avoid an explicit calculation of the exact Riesz representer of every basis function for the use in the $V^\star$ inner product we introduce the discrete Riesz lift. With this we reduce \eqref{eq:hfproblemSP} to an equation for $y_d$ only, from which we define a natural norm on $W_d$, which is also used to state error estimators in Section \ref{subsec:RBCert}.\\

Let $R_d(\mu) : L^2(0,T; V^\star) \rightarrow Q_d$ be an approximation of the (time-extended) Riesz lift $R(\mu) : L^2(0,T; V^\star) \rightarrow L^2(0,T;V)$, given by
\begin{align}\label{eq:rieszapproximations}
    \int_0^T (R_d(\mu)\phi,q_d)_{V(\mu)}
 :=  \int_0^T \langle \phi,q_d\rangle_{V^\star,V}
  =   \int_0^T (R(\mu) \phi,q_d)_{V(\mu)}
  \quad \forall \phi \in L^2(0,T;V^\star), \, q_d \in Q_d.  
\end{align}
Using that $(w_d)_t \in Q_d$ for all $w_d \in W_d$, \eqref{eq:hfproblemSP} is equivalent to
\begin{align}
    b_d(\mu; y_d(\mu),w_d) = l_d(\mu; w_d) \quad \forall w_d \in W_d,
\end{align}
where for arbitrary $v_d, w_d \in W_d$ the forms $b_d(\mu; \cdot, \cdot)$ and $l_d(\mu; \cdot)$ are defined as
\begin{align}\label{eq:def_bd_mu}
  b_d(\mu; v_d,w_d) &:= \int_0^T (R_d(\mu)(v_d)_t, R_d(\mu)(w_d)_t)_{V(\mu)} + \int_0^T (v_d, w_d)_{V(\mu)} + (v_d(T), w_d(T))_H ,\\
    l_d(\mu; w_d)&:=(y_0,w_d(0))_H 
    + \int_0^T \langle f(\mu),R_d(\mu)(w_d)_t\rangle_{V^\star,V}
    + \int_0^T \langle f(\mu),w_d\rangle_{V^\star,V}. 
\end{align}
\begin{definition}[Energy inner product and norm]\label{defi:normequiDISCR}
On $W_d$ we introduce an inner product
\begin{align}
    (v_d, w_d)_\mu := b_d(\mu; v_d, w_d) \qquad \forall v_d, w_d \in W_d
\end{align}
as well as the induced norm $\vert \vert w_d \vert \vert_\mu := (w_d, w_d)_\mu^{1/2}$.\\For the given $\overline{\mu} \in \mathcal{P}$ we equip $W_d$ with the inner product $(\cdot, \cdot)_{W_d} := b_d(\overline{\mu}, \cdot, \cdot)$ and set $\vert \vert \cdot \vert \vert_{W_d} := (\cdot, \cdot)_{W_d}^{1/2}$.
\end{definition}
With the above definitions and by \eqref{eq:rbcoercivitycontconst} we directly obtain that
\begin{align}\label{eq:normsEnergyEqui}
    \min\left\lbrace c_c,\frac{1}{c_s}\right\rbrace \vert \vert w_d \vert \vert_{W_d}^2 \leq b_d(\mu; w_d, w_d) = \vert \vert w_d \vert \vert_\mu^2 \leq \max\left\lbrace c_s,\frac{1}{c_c} \right\rbrace \vert \vert w_d \vert \vert_{W_d}^2 ,
\end{align}
where $c_c$ and $c_s$ are upper resp.~lower bounds for the coercivity and continuity constants of $a(\mu;\cdot,\cdot)$ from \eqref{eq:rbcoercivitycontconst}. Hence, $b_d$ is uniformly coercive and the norms from Definition \ref{defi:normequiDISCR} are equivalent on $W_d$. \\

For later use in Section \ref{subsec:RBCert}, we also define discrete coercivity and continuity constants. Let 
\begin{equation}
    \mathfrak{c}_c(\mu) := \underset{v \in V_N}{\inf} \frac{a(\mu; v,v)}{\vert \vert v \vert \vert^2_V} \geq c_c \quad \text{and} \quad \mathfrak{c}_s(\mu) := \underset{u \in V_N}{\sup} \underset{v \in V_N}{\sup} \frac{a(\mu; u,v)}{\vert \vert u \vert \vert_V \vert \vert v \vert \vert_V} \leq c_s
\end{equation}
denote the discrete parameter-dependent coercivity and continuity constants of $a(\mu; \cdot, \cdot)$. Furthermore we set
\begin{equation}
    \alpha(\mu) := \min\left\lbrace \mathfrak{c}_c(\mu),\frac{1}{\mathfrak{c}_s(\mu)}\right\rbrace \quad \text{and} \quad  \alpha_{\text{LB}} := \min\left\lbrace c_c,\frac{1}{c_s}\right\rbrace \leq \alpha(\mu) ,
\end{equation}
where the latter is a lower bound for the coercivity constant of $b_d$. Then property \eqref{eq:normsEnergyEqui} generalizes to
\begin{align}\label{eq:normsEnergyEqui2}
    \alpha_{\text{LB}} \vert \vert w_d \vert \vert_{W_d}^2 \leq \alpha(\mu) \vert \vert w_d \vert \vert_{W_d}^2 \leq \vert \vert w_d \vert \vert_\mu^2 .
\end{align}

\section{Reduced basis method.}\label{sec:RB}
The idea of \textit{model order reduction} consists in replacing $W_d$ and $Q_d$ with some low-dimensional, problem-specific subspaces $W_{\texttt{L}} := \Span \left\lbrace \xi_{l} \, \vert \, l = 1, ..., \texttt{L}\right\rbrace \subset W_d$ and $Q_{\texttt{K}} := \Span \left\lbrace \rho_{k} \, \vert \, k = 1, ..., \texttt{K}\right\rbrace \subset Q_d$, where $(\xi_{l})_{l=1}^{\texttt{L}}$, resp.~$(\rho_{k})_{k=1}^{\texttt{K}}$ denote the \textit{reduced bases}. These spaces are in general constructed out of previously computed solutions or expert knowledge. For a moment, we assume that the reduced basis spaces are given and discuss the construction of such spaces at the end of this section in Remark \ref{rem:choiceQK} and Algorithm \ref{greedy}.

\begin{definition}[Reduced problem] 
For fixed $\mu \in \mathcal{P}$ we seek for $y_\texttt{rb} \in W_\texttt{L}$ and $p_\texttt{rb} \in Q_\texttt{K}$ such that
\begin{equation}\tag{\ensuremath{\text P^\mu_\texttt{rb}}}\label{eq:reduziertesproblem}
\begin{aligned}
    \hat{a}(\mu; y_\texttt{rb},w_\texttt{rb}) + \hat{b}(w_\texttt{rb},p_\texttt{rb}) &= \hat{l}_1(\mu; w_\texttt{rb}) \qquad &&\forall w_\texttt{rb} \in W_\texttt{L},\\
    \hat{b}(y_\texttt{rb},q_\texttt{rb}) -\hat{c}(\mu; p_\texttt{rb},q_\texttt{rb}) &= \hat{l}_2(\mu; q_\texttt{rb}) \qquad &&\forall q_\texttt{rb} \in Q_\texttt{K}.
\end{aligned}    
\end{equation}
\end{definition}
We highlight that in fact $y_\texttt{rb} = y_\texttt{rb}(\mu)$, however, we omit writing the dependence on $\mu$ for readability whenever it is clear.

For some $y_\texttt{rb} \in W_\texttt{L}$ and $p_\texttt{rb} \in Q_\texttt{K}$ we denote with $\Vec{u}_y \in \RR^{\texttt{L}}$ resp.~$\Vec{u}_p \in \RR^{\texttt{K}}$ the corresponding coefficient vectors in $W_\texttt{L}$ resp.~$Q_\texttt{K}$, i.e.
\begin{align}\label{eq:redsolyp}
    y_\texttt{rb} = \sum_{l=1}^{\texttt{L}} (\Vec{u}_y)_l \xi_l \in W_\texttt{L} \quad \text{and} \quad p_\texttt{rb} = \sum_{k=1}^{\texttt{K}} (\Vec{u}_p)_k \rho_k \in Q_\texttt{K} .
\end{align}
By construction $\xi_l$ and $\rho_k$ can be represented by linear combinations of the bases of $W_d$ and $Q_d$ and consequently $y_\texttt{rb}$ and $p_\texttt{rb}$ can be written as elements of $W_d$ and $Q_d$. For obtaining the coefficients of $y_\texttt{rb}$ and $p_\texttt{rb}$ with respect to the bases of $W_d$ and $Q_d$ let $B_W \in \mathbb{R}^{NM \times \texttt{L}}$, $B_Q \in \mathbb{R}^{NP \times \texttt{K}}$ s.t.~the $l$-th column of $B_W$ is the coefficient vector of $\xi_{l}$ and the $k$-th column of $B_Q$ the coefficient vector of $\rho_{k}$ with respect to the high fidelity bases. Thus, with $w_i$ and $q_i$ being the basis functions of $W_d$ and $Q_d$ respectively, it holds that
\begin{align}
    \xi_l = \sum_{i=1}^{NM}(B_W)_{il}w_i \quad \text{and} \quad \rho_k = \sum_{i=1}^{NP}(B_Q)_{ik}q_i .
\end{align}
Then, with $\Vec{y}_\texttt{rb} \in \RR^{NM}$ and $\Vec{p}_\texttt{rb} \in \RR^{NP}$ being the coefficient vectors of $y_\texttt{rb}$ and $p_\texttt{rb}$ in $W_d$ resp.~$Q_d$, it holds that
\begin{equation}\label{eq:reconstrHFspace}
    \Vec{y}_\texttt{rb} = B_W \Vec{u}_y \qquad \text{and} \qquad \Vec{p}_\texttt{rb} = B_Q \Vec{u}_p .
\end{equation}
Relating these equations with \eqref{eq:hfproblemSP} and \eqref{eq:reduziertesproblem} we obtain the representation \begin{equation}\label{eq:linproblemreduced}
    S_\texttt{rb}(\mu) \begin{pmatrix} \Vec{u}_y\\ \Vec{u}_p\end{pmatrix} = s_\texttt{rb}(\mu)
\end{equation}
of \eqref{eq:reduziertesproblem}, where $S_\texttt{rb}(\mu) \in \RR^{(\texttt{L}+\texttt{K}) \times (\texttt{L}+\texttt{K})}$ 
and $s_\texttt{rb}(\mu) \in \RR^{\texttt{L}+\texttt{K}}$ are given as
\begin{align}
\begin{split}
    S_\texttt{rb}(\mu) &:= \begin{pmatrix}B_W & \\ & B_Q\end{pmatrix}^T S_d(\mu) \begin{pmatrix}B_W & \\ & B_Q\end{pmatrix} \\&\hphantom{:}= \begin{pmatrix} B_W^T (T_t \otimes M_x + M_t \otimes A_x(\mu)) B_W & B_W^T (Z_t^T \otimes M_x) B_Q \\
    B_Q^T (Z_t \otimes M_x) B_W & -B_Q^T(M_t^\psi \otimes A_x(\mu))B_Q
    \end{pmatrix},\end{split}
    \\
    s_\texttt{rb}(\mu) &:= \begin{pmatrix}B_W & \\ & B_Q\end{pmatrix}^T s_d(\mu) = \begin{pmatrix} B_W^T (R_0^t \otimes R_0^x(\mu) + F_1(\mu)) \\ B_Q^T F_2(\mu)
    \end{pmatrix} .
\end{align}

\paragraph*{Efficiency through offline-online decomposition.} The previous formulation can be exploited if the appearing matrices and vectors are parameter-separable. Then $S_\texttt{rb}(\mu)$ and $s_\texttt{rb}(\mu)$ from \eqref{eq:linproblemreduced} can be written as a sum of parameter-independent parts multiplied with parameter-dependent scalar coefficients and are accessible without assembling $S_d(\mu)$ and $s_d(\mu)$. In an \textit{offline phase}, the parameter-independent parts are computed. Then, in an \textit{online phase}, these are used to build $S_\texttt{rb}(\mu)$ and $s_\texttt{rb}(\mu)$, where no assembling is necessary. This approach is known as the \textit{offline-online decomposition} in literature. The offline phase is computationally expensive, but only needs to be performed once. The online phase, in contrast, requires significantly less computational power. We apply this approach in the following.\\

For some $Q_S, Q_s > 0$ and functions $(\theta_S^q)_{q=1}^{Q_S}, (\theta_s^q)_{q=1}^{Q_s} : \mathcal{P} \rightarrow \mathbb{R}$, specified later, we derive a representation of the high fidelity operators $S_d(\mu)$ and $s_d(\mu)$ of the form
\begin{equation}
    S_d(\mu) = \sum_{q=1}^{Q_S} \theta_S^q(\mu) S_q \qquad \text{and} \qquad s_d(\mu) = \sum_{q=1}^{Q_s} \theta_s^q(\mu) s_q .
\end{equation}
In the \textit{offline phase} the operators 
\begin{equation}
    S_\texttt{rb}^q := \begin{pmatrix}B_W & \\ & B_Q\end{pmatrix}^T S_q \begin{pmatrix}B_W & \\ & B_Q\end{pmatrix} \qquad \text{and} \qquad s_\texttt{rb}^q := \begin{pmatrix}B_W & \\ & B_Q\end{pmatrix}^T s_q
\end{equation}
are built once. In the \textit{online phase} we build 
\begin{equation}
    S_\texttt{rb}(\mu) = \sum_{q=1}^{Q_S} \theta^q_S(\mu) S_\texttt{rb}^q \qquad \text{and} \qquad s_\texttt{rb}(\mu) = \sum_{q=1}^{Q_s} \theta^q_s(\mu) s_\texttt{rb}^q
\end{equation}
for arbitrary $\mu \in \mathcal{P}$ and solve \eqref{eq:reduziertesproblem}. The solutions are then represented in the high fidelity space by \eqref{eq:reconstrHFspace}. 

For this we from now on assume that $A(\mu)$, $f(\mu)$ and $y_0(\mu)$ are parameter-separable meaning that there exist $Q_A, Q_f, Q_y >0$ and bounded parameter functions $(\theta^{q}_{A})_{q=1}^{Q_A}, (\theta^{q}_{f})_{q=1}^{Q_f}, (\theta^{q}_{y_{0}})_{q=1}^{Q_y} : \mathcal{P} \rightarrow \mathbb{R}$, $\theta^{q}_{A} > 0$ and parameter-independent operators $A_{q} : V \rightarrow V^\star$, $q=1,...,Q_A$, $f_{q} \in L^2(0,T; V^\star)$, $q=1,...,Q_f$, and $y_{0,q} \in H$, $q=1,...,Q_y$, such that 
\begin{equation}
A(\mu) = \sum_{q=1}^{Q_{A}} \theta^{q}_{A}(\mu) A_{q}, \qquad f(\mu) = \sum_{q=1}^{Q_{f}} \theta^{q}_{f}(\mu) f_{q}, \qquad y_0(\mu) = \sum_{q=1}^{Q_{y}} \theta^{q}_{y_{0}}(\mu) y_{0,q} .
\end{equation}
We introduce the discretization of the parameter-independent parts of the operators by $A_x^q \in \RR^{N\times N}$, $R_{0}^{x,q} \in \RR^N$, $F_{1}^q \in \RR^{NM}$ and $F_{2}^q \in \RR^{NP}$, where
\begin{alignat}{5}
\left(A_{x}^{q}\right)_{i,j} &:= \left\langle A_{q}\phi_{j}, \phi_{i} \right\rangle_{V^{\star}, V}, \qquad  &i,j &= 1, ..., N, \, &q &= 1, ..., Q_{A}, \\
\left(R_{0}^{x,q}\right)_{n} &:= (y_{0,q},\phi_n)_H  &n &= 1, ..., N, \, &q &= 1, ..., Q_{y}, \\
\left(F_{1}^q\right)_{(m-1)N+n} &:= 
\int_{0}^{T} \left\langle f_{q}, \chi_m\phi_{n} \right\rangle_{V^{\star}, V},\quad  &m &= 1, ..., M, \; &n &= 1, ..., N, \, q = 1, ..., Q_{f}, \\
\left(F_{2}^{q}\right)_{(m-1)N+n} &:= 
\int_{0}^{T} \left\langle f_{q}, \psi_m\phi_{n} \right\rangle_{V^{\star}, V},\quad &m &= 1, ..., P, \, &n &= 1, ..., N, \, q = 1, ..., Q_{f}.
\end{alignat}
Now, $S_d(\mu)$ and $s_d(\mu)$ are parameter-separable as desired with $Q_S := Q_A + 1$ and $Q_s := Q_{y} + Q_{f}$ and
\begin{alignat}{5}
    &\theta_S^q(\mu) := \theta_A^q(\mu), \quad &S_q &:= \begin{pmatrix} M_t \otimes A_x^q & 0 \\
    0 & -M_t^\psi \otimes A_x^q
    \end{pmatrix}, \quad & q &= 1, ..., Q_A , \\
    &\theta_S^q(\mu) := 1, \quad &S_q &:= \begin{pmatrix} T_t \otimes M_x & Z_t^T \otimes M_x \\
    Z_t \otimes M_x & 0
    \end{pmatrix}, \quad & q &= Q_A + 1 , \\
    &\theta_s^q(\mu) := \theta^{q}_{y_{0}}(\mu), \quad &s_q &:= \begin{pmatrix} R_0^t \otimes R_{0}^{x,q} \\ 0 \end{pmatrix}, \quad & q &= 1, ..., Q_y , \\
    &\theta_s^q(\mu) := \theta^{i}_{f}(\mu), \quad &s_q &:= \begin{pmatrix} F_{1}^i \\ F_{2}^i
    \end{pmatrix}, \quad & q &= i+ Q_y, \; i = 1, ..., Q_f .
\end{alignat}

\paragraph*{Inner product for functions from $W_\texttt{L}$.} We are also interested in calculating the $W_d$ inner product without having access to the matrix representation of $b_d$ for the use in Section \ref{subsec:RBCert}. The computation of it can be simplified, if $B_Q$ is chosen in dependence of $B_W$ as we show in the following lemma.
\begin{lemma}\label{lem:choiceRB}
Let $v_\texttt{rb}, w_\texttt{rb} \in W_\texttt{L}$ with coefficient vectors $\Vec{v}, \Vec{w} \in \RR^\texttt{L}$. If $B_Q = (M_t^\psi \otimes A_x(\overline{\mu}))^{-1} (Z_t \otimes M_x) B_W$ then the inner product $(v_\texttt{rb}, w_\texttt{rb})_{W_d}$ is given by
\begin{align}
    (v_\texttt{rb}, w_\texttt{rb})_{W_d} = \Vec{v}^T B_W^T(T_t \otimes M_x + M_t \otimes A_x(\overline{\mu})) B_W \Vec{w} + \Vec{v}^T B_Q^T (M_t^\psi \otimes A_x(\overline{\mu})) B_Q \Vec{w} .
\end{align}
\end{lemma}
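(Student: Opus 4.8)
The plan is to compute the inner product $(v_\texttt{rb}, w_\texttt{rb})_{W_d}$ by going through the high-fidelity space and exploiting the definition $b_d(\overline{\mu}; \cdot, \cdot)$ together with the equivalence between the least-squares form $b_d$ and the saddle-point form. Recall from Definition \ref{defi:normequiDISCR} that $(v_\texttt{rb}, w_\texttt{rb})_{W_d} = b_d(\overline{\mu}; v_\texttt{rb}, w_\texttt{rb})$, and that the term $\int_0^T (R_d(\overline{\mu})(v_\texttt{rb})_t, R_d(\overline{\mu})(w_\texttt{rb})_t)_{V(\overline{\mu})}$ is the only one that involves the discrete Riesz lift. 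The key observation is that for $w_d \in W_d$ one has $(w_d)_t \in Q_d$, so by the defining relation \eqref{eq:rieszapproximations} the discrete Riesz lift $R_d(\overline{\mu})(w_d)_t$ is exactly the element $q_d \in Q_d$ with $\int_0^T (q_d, \cdot)_{V(\overline{\mu})} = \int_0^T \langle (w_d)_t, \cdot \rangle_{V^\star, V} = \hat b((w_d)_t \text{ viewed correctly}, \cdot)$ on $Q_d$.

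First I would translate everything into matrix notation. Writing $\vec y_{\texttt{rb}} = B_W \vec w$ for the coefficient vector of $w_\texttt{rb}$ in $W_d$, the time-derivative map $(\cdot)_t : W_d \to Q_d$ together with the $V(\overline{\mu})$-pairing on $Q_d$ shows that the coefficient vector of $R_d(\overline{\mu})(w_\texttt{rb})_t$ in $Q_d$ equals $(M_t^\psi \otimes A_x(\overline{\mu}))^{-1}(Z_t \otimes M_x) B_W \vec w$; here $Z_t \otimes M_x$ is the matrix of $\hat b(\cdot,\cdot)$ between $W_d$ and $Q_d$ and $M_t^\psi \otimes A_x(\overline{\mu})$ is the Gram matrix of $(\cdot,\cdot)_{V(\overline{\mu})}$ on $Q_d$ (i.e.\ of $\hat c(\overline{\mu};\cdot,\cdot)$). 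This is precisely the definition of $B_Q$ in the hypothesis, so the coefficient vector of $R_d(\overline{\mu})(w_\texttt{rb})_t$ in $Q_d$ is $B_Q \vec w$. Then $\int_0^T (R_d(\overline{\mu})(v_\texttt{rb})_t, R_d(\overline{\mu})(w_\texttt{rb})_t)_{V(\overline{\mu})} = (B_Q \vec v)^T (M_t^\psi \otimes A_x(\overline{\mu})) (B_Q \vec w) = \vec v^T B_Q^T (M_t^\psi \otimes A_x(\overline{\mu})) B_Q \vec w$.

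Next I would handle the remaining two terms of $b_d(\overline{\mu}; v_\texttt{rb}, w_\texttt{rb})$, namely $\int_0^T (v_\texttt{rb}, w_\texttt{rb})_{V(\overline{\mu})} + (v_\texttt{rb}(T), w_\texttt{rb}(T))_H$. These are exactly $\hat a(\overline{\mu}; v_\texttt{rb}, w_\texttt{rb})$, whose matrix on $W_d$ is $T_t \otimes M_x + M_t \otimes A_x(\overline{\mu})$, giving $\vec v^T B_W^T (T_t \otimes M_x + M_t \otimes A_x(\overline{\mu})) B_W \vec w$. Adding the two contributions yields the claimed formula. The main obstacle — really the only non-bookkeeping point — is to verify carefully that the matrix of the bilinear form $(q,q') \mapsto \int_0^T (q, q')_{V(\overline{\mu})}$ restricted to $Q_d = J_P \otimes V_N$ is indeed $M_t^\psi \otimes A_x(\overline{\mu})$ (using $(\phi_j,\phi_i)_{V(\overline\mu)} = a(\overline\mu;\phi_j,\phi_i) = (A_x(\overline\mu))_{i,j}$ and the tensor-product structure via \cite[§~1.3.6]{Golub}), and likewise that $Z_t \otimes M_x$ represents $\hat b$ between $W_d$ and $Q_d$; once these identifications are in place, invertibility of $M_t^\psi \otimes A_x(\overline{\mu})$ (a Kronecker product of two symmetric positive definite matrices) makes $B_Q$ well defined and the computation closes.
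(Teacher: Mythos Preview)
Your proposal is correct and follows essentially the same direct calculation as the paper: both identify $(v_\texttt{rb}, w_\texttt{rb})_{W_d} = b_d(\overline{\mu}; v_\texttt{rb}, w_\texttt{rb})$, recognize that the $\hat a(\overline{\mu};\cdot,\cdot)$ part contributes $B_W^T(T_t\otimes M_x + M_t\otimes A_x(\overline{\mu}))B_W$, and that the discrete Riesz-lift term, after expressing it via the matrices $Z_t\otimes M_x$ and $(M_t^\psi\otimes A_x(\overline{\mu}))^{-1}$, collapses to $B_Q^T(M_t^\psi\otimes A_x(\overline{\mu}))B_Q$ under the stated choice of $B_Q$. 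The paper simply compresses your argument into one line by writing the full matrix of $b_d(\overline{\mu};\cdot,\cdot)$ on $W_d$ as $T_t\otimes M_x + M_t\otimes A_x(\overline{\mu}) + Z_t^T(M_t^\psi)^{-1}Z_t\otimes M_x A_x^{-1}(\overline{\mu})M_x$ and then substituting $B_Q$.
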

\begin{proof}
This follows by a direct calculation, since
\begin{align}
    (v_\texttt{rb}, w_\texttt{rb})_{W_d} &= b_d(\overline{\mu}, v_\texttt{rb}, w_\texttt{rb}) = \Vec{v}^T B_W^T (T_t \otimes M_x + M_t \otimes A_x(\overline{\mu}) + Z_t^T (M_t^\psi)^{-1} Z_t \otimes M_x A_x^{-1}(\overline{\mu}) M_x) B_W\Vec{w} \\
    &= \Vec{v}^T \left( B_W^T(T_t \otimes M_x + M_t \otimes A_x(\overline{\mu})) B_W + B_Q^T (M_t^\psi \otimes A_x(\overline{\mu})) B_Q \right) \Vec{w} .
\end{align}
\end{proof}
\begin{remark}\label{rem:choiceQK}
    We require the choice $B_Q = (M_t^\psi \otimes A_x(\overline{\mu}))^{-1} (Z_t \otimes M_x) B_W$ from now on for the rest of this paper. \\
    For the application of the POD method, described in Remark \ref{rem:POD}, we need efficient access to the matrix representation of the $W_d$ inner product in the reduced basis space. Therefore, for every system solve of \eqref{eq:hfproblemSP}, leading to some $\Vec{y} \in \RR^{NM}$, which is the coefficient vector of the solution $y_d$, we also compute 
\begin{align}
    (M_t^\psi \otimes A_x(\overline{\mu}))^{-1} (Z_t \otimes M_x) \Vec{y} .
\end{align}
\end{remark}
\begin{remark}[Formulation of space-time POD]\label{rem:POD} In practice, reduced basis spaces are often further reduced in dimension by removing unnecessary \textit{information}. This can done by the POD method. Let $\mathcal{P}_h \subseteq \mathcal{P}$ with $\texttt{L} = \vert \mathcal{P}_h \vert$. Upon $\texttt{L}$ computed solutions of this parameter set, the POD method can be used to produce a smaller reduced basis space $W_{\texttt{POD}} \subseteq  \Span{\lbrace y_d(\mu) \, \vert \, \mu \in \mathcal{P}_h\rbrace}$ that minimizes the best-approximation error between $W_\texttt{POD}$ and $W_d$,
\begin{align}
    \sqrt{\frac{1}{\texttt{L}} \sum_{\mu \in \mathcal{P}_h}\underset{w \in W_{\texttt{POD}}}{\inf} \vert \vert y_d(\mu)-w\vert\vert_{W_d}}.
\end{align}
Together with Lemma \ref{lem:choiceRB} the implementation of POD can be realized analogously to the case of reduced basis methods for elliptic problems, see e.g.~\cite[p.~33]{Hesthaven2016} for an implementation. We also highlight that with this formulation all advantages from the classical POD approach take over to the space-time $W_d$ norm. We refer to \cites[§~3.2.1]{Hesthaven2016}[§~2.4.5]{haasdonk2017reduced}{GräßleHinzeVolkwein+2021+47+96} for more details. This approach will be used together with a \textit{greedy algorithm} to recover the classical \textit{POD-greedy approach} \cites[Def.~2.92]{haasdonk2017reduced}[§~6.1.2]{Hesthaven2016} in the following.
\end{remark}

In Algorithm \ref{greedy} we state a procedure to generate a reduced basis for a given set of parameters. Here, $\eta$ denotes an error estimator, specified later.

\begin{algorithm}
\caption{POD-greedy procedure}\label{greedy}
\begin{algorithmic}[1]
\State Choose $\mathcal{S}_{\text{train}} \subset \mathcal{P}$, an arbitrary $\mu^{1} \in \mathcal{P}$, $\mu^1 \not\in \mathcal{S}_{\text{train}}$, $\epsilon_{\text{tol}} > 0$, $\texttt{L}_1, \texttt{L}_2 \in \mathbb{N} \setminus \lbrace 0 \rbrace$ and an error estimator $\eta$
\State Set $\texttt{L} := 1$, $W_{1} := \Span \lbrace y_{d}(\mu^{1}) / \vert\vert y_{d}(\mu^{1})\vert\vert_{W_d} \rbrace$, $\mathcal{Z} := \lbrace y_{d}(\mu^{1}) \rbrace$
\While{$\underset{\mu \in \mathcal{S}_{\text{train}}}{\max} \eta(\mu) > \epsilon_{\text{tol}}$}
\State $\texttt{L} \gets \texttt{L} + \texttt{L}_1$
\State $l:=1$
\While{$l \leq \texttt{L}_2$}
\State $\mu^{\texttt{L}}_{l} := \underset{\mu \in \mathcal{S}_{\text{train}}}{\arg \max} \; \eta(\mu)$\label{alg:online_phase}
\State $\mathcal{S}_{\text{train}} := \mathcal{S}_{\text{train}}\setminus \lbrace \mu^{\texttt{L}}_{l} \rbrace$
\State $l \gets l+1$
\EndWhile
\State $\mathcal{Z} = \mathcal{Z} \cup \lbrace y_{d}(\mu^{\texttt{L}}_1), ..., y_{d}(\mu^{\texttt{L}}_{\texttt{L}_2}) \rbrace$\label{alg:sys_solves}
\State $W_{\texttt{L}} := \POD_{\texttt{L}}(\mathcal{Z})$\label{alg:POD}
\EndWhile
\end{algorithmic}
\end{algorithm}
In each step of Algorithm \ref{greedy} the reduced basis space $W_\texttt{L}$ in enriched by $\texttt{L}_1$ basis functions. For that, either some absolute or relative error estimator is used. We introduce such estimators in Section \ref{subsec:RBCert}. In each iteration we select $\texttt{L}_2$ parameters that maximize the error estimator. We compute the high fidelity solutions for those parameters and store all of them in some set $\mathcal{Z}$. The reduced basis space $W_\texttt{L}$ then is obtained by applying POD onto $\mathcal{Z}$ and taking the first $\texttt{L}$ basis functions generated by POD.

\section{Certification.}\label{subsec:RBCert}
We enrich our reduced basis space by those high fidelity solutions, which are very likely to maximize the best-approximation error of the reduced basis space $W_{\texttt{L}}$. Also, we quantify this error for the use in the Algorithm \ref{greedy}. Therefore, we in the following introduce absolute and relative error estimators in the $W_d$ norm similarly to the elliptic-PDE case. For that, we first transfer a well-known estimator for elliptic PDEs into the space-time setting in Theorem \ref{thm:RBerror_ex}. From that we derive (absolute and relative) error estimators, which allow a faster computation by some offline-online decomposition.\\

\noindent To start with, we introduce the discrete residual as
\begin{equation}
    r_d(\mu, \cdot) := l_d(\mu; \cdot) - b_d(\mu; y_\texttt{rb}(\mu), \cdot) \in (W_d)^\star,
\end{equation}
with $l_d$ and $b_d$ as introduced in \eqref{eq:def_bd_mu}. Let $y_d(\mu)$ be the solution of \eqref{eq:hfproblemSP} for some fixed $\mu \in \mathcal{P}$ and $y_\texttt{rb}(\mu)$ the solution to \eqref{eq:reduziertesproblem}. We denote the absolute and relative errors of the reduced basis approximation with 
\begin{align}
    \epsilon^\text{abs}(\mu) := \vert \vert y_{d}(\mu) - y_{\texttt{rb}}(\mu) \vert \vert _{W_d} \quad \text{and} \quad \epsilon^\text{rel}(\mu) := \frac{\vert \vert y_{d}(\mu) - y_{\texttt{rb}}(\mu) \vert \vert _{W_d}}{\vert \vert y_{d}(\mu) \vert \vert _{W_d}} .
\end{align}
Analogously to the elliptic-PDE case, we estimate these errors by the norm of the residual.
\begin{theorem}\label{thm:RBerror_ex}
Let $\Tilde{r}_d(\mu) \in W_d$ be the Riesz representation of $r_d(\mu, \cdot)$. In the given reduced basis setting it holds that
\begin{alignat}{3} \label{eq:RB_first_abs_est}
    &\epsilon^\text{abs}(\mu) &&\leq \frac{\vert \vert \Tilde{r}_d(\mu) \vert \vert _{W_d}}{\alpha(\mu)} &&=: \RBestFineABS(\mu), \\
    &\epsilon^\text{rel}(\mu) &&\leq \frac{2\vert \vert \Tilde{r}_d(\mu) \vert \vert _{W_d}}{\alpha(\mu)\vert \vert y_{\texttt{rb}}(\mu) \vert \vert _{W_d}} &&=: \RBestFineREL(\mu), \quad \text{if} \; \; \RBestFineREL(\mu) \leq 1 . \label{eq:RB_first_rel_est}
\end{alignat}
\end{theorem}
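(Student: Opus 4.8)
The plan is to mimic the standard Céa-type / residual-based a posteriori argument from the elliptic reduced basis theory, using that $b_d(\mu;\cdot,\cdot)$ is a symmetric, coercive, continuous bilinear form on $W_d$, hence an inner product, and that $y_d(\mu)$ solves $b_d(\mu;y_d,w_d)=l_d(\mu;w_d)$ for all $w_d\in W_d$. First I would establish the \emph{error-residual identity}: for any $w_d\in W_d$,
\begin{align*}
b_d(\mu;\,y_d(\mu)-y_\texttt{rb}(\mu),\,w_d) = l_d(\mu;w_d) - b_d(\mu;y_\texttt{rb}(\mu),w_d) = r_d(\mu,w_d) = (\Tilde r_d(\mu),w_d)_{W_d},
\end{align*}
where the last equality is the definition of the Riesz representer $\Tilde r_d(\mu)$ with respect to the $(\cdot,\cdot)_{W_d}$ inner product. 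Testing with $w_d = y_d(\mu)-y_\texttt{rb}(\mu)$ and using the coercivity estimate \eqref{eq:normsEnergyEqui2}, namely $\alpha(\mu)\|w_d\|_{W_d}^2 \le \|w_d\|_\mu^2 = b_d(\mu;w_d,w_d)$, gives
\begin{align*}
\alpha(\mu)\,\epsilon^\text{abs}(\mu)^2 \le b_d(\mu;\,y_d(\mu)-y_\texttt{rb}(\mu),\,y_d(\mu)-y_\texttt{rb}(\mu)) = (\Tilde r_d(\mu),\,y_d(\mu)-y_\texttt{rb}(\mu))_{W_d} \le \|\Tilde r_d(\mu)\|_{W_d}\,\epsilon^\text{abs}(\mu)
\end{align*}
by Cauchy--Schwarz, and dividing by $\alpha(\mu)\,\epsilon^\text{abs}(\mu)$ yields the absolute bound \eqref{eq:RB_first_abs_est} (the degenerate case $\epsilon^\text{abs}(\mu)=0$ being trivial).

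For the relative bound I would derive it from the absolute one by controlling $\|y_d(\mu)\|_{W_d}$ from below in terms of $\|y_\texttt{rb}(\mu)\|_{W_d}$. By the reverse triangle inequality, $\|y_d(\mu)\|_{W_d} \ge \|y_\texttt{rb}(\mu)\|_{W_d} - \epsilon^\text{abs}(\mu) \ge \|y_\texttt{rb}(\mu)\|_{W_d} - \RBestFineABS(\mu)$. Under the standing hypothesis $\RBestFineREL(\mu) = \tfrac{2\RBestFineABS(\mu)}{\|y_\texttt{rb}(\mu)\|_{W_d}} \le 1$, we get $\RBestFineABS(\mu) \le \tfrac12\|y_\texttt{rb}(\mu)\|_{W_d}$, hence $\|y_d(\mu)\|_{W_d} \ge \tfrac12\|y_\texttt{rb}(\mu)\|_{W_d} > 0$. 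Combining,
\begin{align*}
\epsilon^\text{rel}(\mu) = \frac{\epsilon^\text{abs}(\mu)}{\|y_d(\mu)\|_{W_d}} \le \frac{\RBestFineABS(\mu)}{\tfrac12\|y_\texttt{rb}(\mu)\|_{W_d}} = \frac{2\|\Tilde r_d(\mu)\|_{W_d}}{\alpha(\mu)\,\|y_\texttt{rb}(\mu)\|_{W_d}} = \RBestFineREL(\mu),
\end{align*}
which is \eqref{eq:RB_first_rel_est}.

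The routine ingredients here — Galerkin orthogonality of the residual, Cauchy--Schwarz, reverse triangle inequality — are exactly as in the elliptic case, so the only real point requiring care is the bookkeeping around which inner product and which coercivity constant is used: the residual norm and the Riesz representer must both be taken in the \emph{fixed reference} norm $\|\cdot\|_{W_d} = b_d(\overline\mu;\cdot,\cdot)^{1/2}$, while the coercivity constant $\alpha(\mu)$ appearing in the denominator compares $\|\cdot\|_\mu^2$ with $\|\cdot\|_{W_d}^2$ via \eqref{eq:normsEnergyEqui2}. I expect the main (mild) obstacle to be making sure the chain ``$b_d(\mu;\cdot,\cdot) \ge \alpha(\mu)\|\cdot\|_{W_d}^2$, then Cauchy--Schwarz in $(\cdot,\cdot)_{W_d}$'' is applied to the correct norms so that the constant $\alpha(\mu)$ lands in the denominator and nothing is lost; the lower bound $\|y_d(\mu)\|_{W_d}\ge \tfrac12\|y_\texttt{rb}(\mu)\|_{W_d}$ under the hypothesis $\RBestFineREL(\mu)\le1$ is the one slightly non-obvious step in the relative estimate.
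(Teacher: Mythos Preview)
Your proposal is correct and follows essentially the same approach as the paper: the error--residual identity $b_d(\mu;e_d,w_d)=(\Tilde r_d(\mu),w_d)_{W_d}$, coercivity \eqref{eq:normsEnergyEqui2}, and Cauchy--Schwarz for the absolute bound, then the reverse triangle inequality under $\RBestFineREL(\mu)\le 1$ to get $\|y_d(\mu)\|_{W_d}\ge\tfrac12\|y_\texttt{rb}(\mu)\|_{W_d}$ for the relative bound. The only cosmetic difference is that the paper routes the absolute estimate through the $\|\cdot\|_\mu$ norm in two steps (first bounding $\|e_d\|_\mu$, then converting back to $\|e_d\|_{W_d}$), whereas you apply the coercivity $\alpha(\mu)\|e_d\|_{W_d}^2\le b_d(\mu;e_d,e_d)$ directly in one step; both yield the same constant $\alpha(\mu)$ in the denominator.
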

\begin{proof}
We proceed similarly to \cites[Prop.~4.4]{Hesthaven2016}[Prop.~2.24, 2.27]{haasdonk2017reduced}. Let $\mu \in \mathcal{P}$ be arbitrary but fixed, $y_d(\mu) \in W_d$ the solution to \eqref{eq:hfproblemSP} and $y_{\texttt{rb}}(\mu) \in W_\texttt{L}$ the solution to \eqref{eq:reduziertesproblem}. We denote the error $e_d(\mu) \in W_d$ by
\begin{align}
    e_d(\mu) := y_d(\mu) - y_\texttt{rb}(\mu).
\end{align}
Then for arbitrary $w_d \in W_d$ it holds that
\begin{align}
    b_d(\mu; e_d(\mu), w_d) &= b_d(\mu; y_d(\mu), w_d) - b_d(\mu; y_{\texttt{rb}}(\mu), w_d) \\
    &= l_d(\mu; w_d) - b_d(\mu; y_{\texttt{rb}}(\mu), w_d) \\
    &= r_d(\mu; w_d) \\
    & = (\Tilde{r}_d(\mu), w_d)_{W_d} .
\end{align}
With this we receive together with the Cauchy–Schwarz inequality and property \eqref{eq:normsEnergyEqui2} that
\begin{align}
    \vert \vert y_{d}(\mu) - y_{\texttt{rb}}(\mu) \vert \vert_\mu^2 &= \vert \vert e_d(\mu) \vert \vert_\mu^2 = b_d(\mu; e_d(\mu), e_d(\mu)) \\
    &= (\Tilde{r}_d(\mu), e_d(\mu))_{W_d} \\
    &\leq \vert \vert \Tilde{r}_d(\mu) \vert \vert_{W_d} \vert \vert e_d(\mu) \vert \vert_{W_d} \\
    &\leq \vert \vert \Tilde{r}_d(\mu) \vert \vert_{W_d} \frac{\vert \vert e_d(\mu) \vert \vert_\mu}{\sqrt{\alpha(\mu)}}.
\end{align}
Thus, $\vert \vert y_{d}(\mu) - y_{\texttt{rb}}(\mu) \vert \vert_\mu \leq \frac{\vert \vert \Tilde{r}_d(\mu) \vert \vert_{W_d}}{\sqrt{\alpha(\mu)}}$. With this and using property \eqref{eq:normsEnergyEqui2} again we obtain \eqref{eq:RB_first_abs_est} since
\begin{align}
    \vert \vert y_{d}(\mu) - y_{\texttt{rb}}(\mu) \vert \vert_{W_d}^2 &= \vert \vert e_d(\mu) \vert \vert_{W_d}^2 \leq \frac{\vert \vert e_d(\mu) \vert \vert_{\mu}^2}{\alpha(\mu)} \leq \frac{\vert \vert \Tilde{r}_d(\mu) \vert \vert_{W_d}^2}{\alpha(\mu)^2}.
\end{align}
We now use the notation in \eqref{eq:RB_first_abs_est} and \eqref{eq:RB_first_rel_est} and assume that $\RBestFineREL(\mu) \leq 1$. Then it holds that
\begin{align}
    \vert \vert y_d(\mu) \vert \vert_{W_d} &= \vert \vert y_{\texttt{rb}}(\mu) \vert \vert_{W_d} + \vert \vert y_d(\mu) \vert \vert_{W_d} - \vert \vert y_{\texttt{rb}}(\mu) \vert \vert_{W_d} \\
    &\geq \vert \vert y_{\texttt{rb}}(\mu) \vert \vert_{W_d} + \vert \vert y_d(\mu) - y_{\texttt{rb}}(\mu) \vert \vert_{W_d} \\
    &\geq \vert \vert y_{\texttt{rb}}(\mu) \vert \vert_{W_d} - \RBestFineABS(\mu) \\&= \left(1 - \frac{1}{2}\RBestFineREL(\mu)\right) \vert \vert y_{\texttt{rb}}(\mu) \vert \vert_{W_d} \\
    &\geq \frac{1}{2} \vert \vert y_{\texttt{rb}}(\mu) \vert \vert_{W_d} .
\end{align}
With this we finally receive
\begin{align}
    \RBestFineREL(\mu) &= \frac{2\vert \vert \Tilde{r}_d(\mu) \vert \vert _{W_d}}{\alpha(\mu)\vert \vert y_{\texttt{rb}}(\mu) \vert \vert _{W_d}} = 2 \frac{\vert \vert y_d(\mu) \vert \vert_{W_d}}{\vert \vert y_{\texttt{rb}}(\mu) \vert \vert_{W_d}} \frac{\RBestFineABS(\mu)}{\vert \vert y_d(\mu) \vert \vert_{W_d}}\\
    &\geq \frac{\RBestFineABS(\mu)}{\vert \vert y_d(\mu) \vert \vert_{W_d}} \geq \frac{\vert \vert y_{d}(\mu) - y_{\texttt{rb}}(\mu) \vert \vert _{W_d}}{\vert \vert y_{d}(\mu) \vert \vert _{W_d}} .
\end{align}
\end{proof}

\paragraph*{Fully practical estimation of the residual norm.}
The estimators in Theorem \ref{thm:RBerror_ex} have exactly the same formulation as the ones for the elliptic case in \cites[Prop.~4.4]{Hesthaven2016}[Prop.~2.24, 2.27]{haasdonk2017reduced}. However, in comparison the norm of the Riesz representer of the residual $\vert \vert \Tilde{r}_d(\mu) \vert \vert _{W_d}$ is not offline-online decomposable and requires further systems solves with $A_x(\mu)$. In the following we present a variant of offline-online decomposable error estimators, which involve an estimation of the residual norm. \\

To obtain these estimators, we compute the norm of the Riesz representer of the residual and multiply the matrix formulation of the inner product on both sides with $(I_M \otimes A_x(\mu)M_x^{-1})(I_M \otimes A_x(\mu)M_x^{-1})^{-1}$, where $I_M \in \RR^{M\times M}$ is the identity matrix. This leads to a matrix $\mathfrak{S}(\mu) \in \RR^{NM \times NM}$ and vector $\mathfrak{s}(\mu) \in \RR^{NM}$ that allow a rewriting of $\vert \vert \Tilde{r}_d(\mu) \vert \vert _{W_d}$ in Theorem \ref{thm:RB_error_est_cc_sq}. We introduce those quantities in the following definition.
\begin{definition}
Let $I_M \in \RR^{M\times M}$ and $I_N \in \RR^{N\times N}$ denote identity matrices of respective sizes. For fixed $\mu \in \mathcal{P}$ we define
\begin{align}
    \mathfrak{S}(\mu) &:= Z_t^T (M_t^\psi)^{-1} Z_t \otimes M_x + M_t\otimes A_x(\mu) M_x^{-1} A_x(\mu) + T_t\otimes A_x(\mu), \\
    \mathfrak{s}(\mu) &:= R_0^t\otimes A_x(\mu)M_x^{-1} R_0^x(\mu) + \left(I_M \otimes A_x(\mu)M_x^{-1}\right) F_1(\mu) + \left(Z_t^T (M_t^\psi)^{-1} \otimes I_N\right) F_2(\mu).
\end{align}
To obtain an offline-online decomposition of the form
\begin{align}
    \mathfrak{S}(\mu) = \sum_{q=1}^{Q_{\mathfrak{S}}} \theta_{\mathfrak{S}}^{q}(\mu) \mathfrak{S}_{q} \quad \text{and} \quad \mathfrak{s}(\mu) = \sum_{q=1}^{Q_{\mathfrak{s}}} \theta_{\mathfrak{s}}^{q}(\mu) \mathfrak{s}_{q}
\end{align}
we set
\begin{alignat}{3}\label{eq:offlineoperatorstart}
\theta_{\mathfrak{s}}^{q} &:= \theta_{A}^{i} \theta_{y_0}^j, &&\mathfrak{s}_{q} := R_{0}^{t} \otimes A_{x}^{i}M_{x}^{-1}R_{0}^{x,j}, &&q = (j-1) Q_A + i , \\
& && && i = 1, ..., Q_{A}, \, j=1,...,Q_y, \nonumber \\
\theta_{\mathfrak{s}}^{q} &:= \theta_{A}^{i} \theta_{f}^{j}, \quad &&\mathfrak{s}_{q} := \left(I_M \otimes A_{x}^{i} M_{x}^{-1}\right) F_{1}^{j}, \quad &&q = (j-1)Q_{A} + i + Q_{A}Q_y, \\
& && && i = 1, ..., Q_{A}, \, j = 1, ..., Q_{f}, \nonumber \\
\theta_{\mathfrak{s}}^{q} &:= \theta_{f}^{i}, &&\mathfrak{s}_{q} := \left(Z_t^T (M_t^\psi)^{-1} \otimes I_N\right) F_{2}^{i}, \quad &&q = i + Q_{A}Q_{f} + Q_{A}Q_y, \\
& && && i = 1, ..., Q_{f}, \nonumber \\
\theta_{\mathfrak{S}}^{1} &:= 1, &&\mathfrak{S}_{1} := Z_t^T (M_t^\psi)^{-1} Z_t \otimes M_{x}, && \\
\theta_{\mathfrak{S}}^{q} &:= \theta_{A}^{i}\theta_{A}^{j}, \quad && \mathfrak{S}_{q} := M_{t} \otimes A_{x}^{i} M_{x}^{-1} A_{x}^{j}, \quad &&q = (j-1)Q_{A} + i + 1,\\
& && &&i,j = 1, ..., Q_{A}, \nonumber \\
\theta_{\mathfrak{S}}^{q} &:= \theta_{A}^{i}, && \mathfrak{S}_{q} := T_{t} \otimes A_{x}^{i}, &&q = i + Q_{A}^{2} + 1,\label{eq:offlineoperatorend}\\
& && &&i = 1, ..., Q_{A}, \nonumber
\end{alignat}
as well as $Q_{\mathfrak{s}} := Q_{A}Q_y + Q_{A}Q_{f} + Q_{f}$ and $Q_{\mathfrak{S}} := 1 + Q_{A}^{2} + Q_{A}$.
\end{definition}

\begin{theorem}[Estimation of the residual norm]\label{thm:RB_error_est_cc_sq} 
Let $\mu \in \mathcal{P}$ be fixed and $\mathfrak{r}(\mu) \in \RR^{Q_{\mathfrak{s}}+Q_{\mathfrak{S}}\texttt{L}}$, $\mathfrak{R} \in \RR^{(Q_{\mathfrak{s}}+Q_{\mathfrak{S}}\texttt{L}) \times NM}$, $\mathfrak{G} \in \RR^{NM\times NM}$ with
\begin{align}\label{eq:residualRBparameterpart}
    \mathfrak{r}(\mu) &:= \left(\theta_{\mathfrak{s}}^{1}(\mu), ..., \theta_{\mathfrak{s}}^{Q_{\mathfrak{s}}}(\mu), -\Vec{u}_{y}^T\theta_{\mathfrak{S}}^{1}(\mu), ..., -\Vec{u}_{y}^T\theta_{\mathfrak{S}}^{Q_{\mathfrak{S}}}(\mu)\right)^T , \\
    \mathfrak{R} &:= \left(\mathfrak{s}_{1}, ...,  \mathfrak{s}_{Q_{\mathfrak{s}}}, \mathfrak{S}_{1}B_W, ..., \mathfrak{S}_{Q_{\mathfrak{S}}}B_W\right) , \\
    \mathfrak{G} &:= \mathfrak{R}^T\left(Z_t^T (M_t^\psi)^{-1} Z_t \otimes A_{x}(\overline{\mu}) + M_{t} \otimes M_{x}^{-2} A_{x}(\overline{\mu})^3 + T_t \otimes M_x^{-1}A_x(\overline{\mu})^2\right)^{-1}\mathfrak{R} .\label{eq:residualRBotherpart}
\end{align} 
Then it holds that
\begin{alignat}{3}\label{eq:rbabsestnew}
    &\epsilon^\text{abs}(\mu) &&\leq \frac{\sqrt{\mathfrak{r}(\mu)^T \mathfrak{G} \mathfrak{r}(\mu)}}{\mathfrak{c}_c(\mu) \alpha(\mu)} &&=: \RBestCcABS(\mu) , \\
    &\epsilon^\text{rel}(\mu) &&\leq \frac{2 \sqrt{\mathfrak{r}(\mu)^T \mathfrak{G} \mathfrak{r}(\mu)}}{\mathfrak{c}_c(\mu) \alpha(\mu)\vert \vert y_{\texttt{rb}}(\mu) \vert \vert _{W_d}} &&=: \RBestCcREL(\mu), \quad \text{if} \; \; \RBestCcREL(\mu) \leq 1 .
\end{alignat}
\end{theorem}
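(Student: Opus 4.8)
The goal is to show that the quantity $\sqrt{\mathfrak{r}(\mu)^T\mathfrak{G}\mathfrak{r}(\mu)}$ reproduces a computable expression for $\mathfrak{c}_c(\mu)\|\tilde r_d(\mu)\|_{W_d}$ (or something that dominates it), so that the bounds follow by inserting into Theorem~\ref{thm:RBerror_ex}. The plan is to work backwards from $\|\tilde r_d(\mu)\|_{W_d}^2 = r_d(\mu,\tilde r_d(\mu))$ and the definition $(\cdot,\cdot)_{W_d} = b_d(\overline\mu;\cdot,\cdot)$. First I would fix the basis of $W_d$ given by $\chi_m\otimes\phi_n$ and write down the matrix $G_W$ of the $W_d$ inner product: by the computation already carried out inside the proof of Lemma~\ref{lem:choiceRB}, $G_W = T_t\otimes M_x + M_t\otimes A_x(\overline\mu) + Z_t^T(M_t^\psi)^{-1}Z_t\otimes M_x A_x(\overline\mu)^{-1}M_x$. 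Similarly I would express the residual functional $r_d(\mu,\cdot)$ evaluated at the basis functions as a vector $\vec r(\mu)\in\RR^{NM}$: using the definition of $l_d,b_d$ in \eqref{eq:def_bd_mu} and the discrete Riesz lift \eqref{eq:rieszapproximations}, the term $\int_0^T(R_d(\mu)(v_d)_t,R_d(\mu)(w_d)_t)_{V(\mu)}$ contributes, after eliminating $R_d$, a factor involving $A_x(\mu)M_x^{-1}A_x(\mu)$ on the time-derivative block $M_t$, and analogously for the load side; the residual in coordinates works out to $\vec r(\mu) = \mathfrak{s}(\mu) - \mathfrak{S}(\mu)B_W\vec u_y$, which is exactly $\mathfrak{R}\,\mathfrak{r}(\mu)$ once the parameter separations \eqref{eq:offlineoperatorstart}–\eqref{eq:offlineoperatorend} are substituted.

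With these in hand, $\|\tilde r_d(\mu)\|_{W_d}^2 = \vec r(\mu)^T G_W^{-1}\vec r(\mu)$ (the Riesz representer in coordinates is $G_W^{-1}\vec r(\mu)$ and its $W_d$-norm-squared is $\vec r(\mu)^T G_W^{-1}\vec r(\mu)$). The subtlety is that $\mathfrak{G}$ is \emph{not} built from $G_W^{-1}$ but from the inverse of $\mathcal G := Z_t^T(M_t^\psi)^{-1}Z_t\otimes A_x(\overline\mu) + M_t\otimes M_x^{-2}A_x(\overline\mu)^3 + T_t\otimes M_x^{-1}A_x(\overline\mu)^2$. The key algebraic identity to establish is $\mathcal G = (I_M\otimes A_x(\overline\mu)M_x^{-1})\,G_W\,(I_M\otimes M_x^{-1}A_x(\overline\mu))$, which follows by expanding each of the three Kronecker summands of $G_W$ and using $(A\otimes B)(C\otimes D)=(AC)\otimes(BD)$ together with $M_x^{-1}M_x = I_N$; this is exactly the ``multiply on both sides by $(I_M\otimes A_x(\mu)M_x^{-1})(I_M\otimes A_x(\mu)M_x^{-1})^{-1}$'' manoeuvre mentioned in the text before the definition, specialized to $\overline\mu$. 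Consequently $G_W^{-1} = (I_M\otimes M_x^{-1}A_x(\overline\mu))\,\mathcal G^{-1}\,(I_M\otimes A_x(\overline\mu)M_x^{-1})$. The conjugating matrix $P := I_M\otimes A_x(\overline\mu)M_x^{-1}$ is where the factor $\mathfrak{c}_c(\mu)$ enters: I would show $\|P\vec r(\mu)\|^2$ relates to $\|\vec r(\mu)\|$ via the spectral bounds of $A_x(\overline\mu)M_x^{-1}$, but actually the cleaner route is to note that $\mathfrak{G} = \mathfrak{R}^T\mathcal G^{-1}\mathfrak{R}$ and $\mathfrak{r}(\mu)^T\mathfrak{G}\mathfrak{r}(\mu) = (\mathfrak{R}\mathfrak{r}(\mu))^T\mathcal G^{-1}(\mathfrak{R}\mathfrak{r}(\mu)) = \vec r(\mu)^T\mathcal G^{-1}\vec r(\mu)$, and then compare $\vec r^T\mathcal G^{-1}\vec r$ with $\vec r^T G_W^{-1}\vec r = \|\tilde r_d(\mu)\|_{W_d}^2$. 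Writing $G_W = P^{-1}\mathcal G P^{-T}$ gives $G_W^{-1} = P^T\mathcal G^{-1}P$, so $\vec r^T G_W^{-1}\vec r = (P\vec r)^T\mathcal G^{-1}(P\vec r)$; the mismatch with $\vec r^T\mathcal G^{-1}\vec r$ is governed by the smallest eigenvalue of $P^TP = I_M\otimes M_x^{-1}A_x(\overline\mu)^2 M_x^{-1}$, equivalently of $A_x(\overline\mu)M_x^{-1}$, which is bounded below by $\mathfrak{c}_c(\overline\mu)$; combining with $\mathfrak{c}_c(\overline\mu)\ge c_c$ and the definition of $\mathfrak{c}_c(\mu)$ one gets $\|\tilde r_d(\mu)\|_{W_d}\le \sqrt{\mathfrak{r}(\mu)^T\mathfrak{G}\mathfrak{r}(\mu)}/\mathfrak{c}_c(\mu)$.

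Once the inequality $\|\tilde r_d(\mu)\|_{W_d}\le \sqrt{\mathfrak{r}(\mu)^T\mathfrak{G}\mathfrak{r}(\mu)}/\mathfrak{c}_c(\mu)$ is secured, both claimed bounds are immediate: substitute into $\RBestFineABS(\mu)=\|\tilde r_d(\mu)\|_{W_d}/\alpha(\mu)$ from \eqref{eq:RB_first_abs_est} to get the absolute bound, and into \eqref{eq:RB_first_rel_est} for the relative one, with the side condition $\RBestCcREL(\mu)\le 1$ transferring verbatim the role of $\RBestFineREL(\mu)\le 1$ in Theorem~\ref{thm:RBerror_ex} (since $\RBestFineREL(\mu)\le\RBestCcREL(\mu)$, the hypothesis of the relative estimate in Theorem~\ref{thm:RBerror_ex} is implied). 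The main obstacle I anticipate is bookkeeping: verifying that the coordinate vector of $r_d(\mu,\cdot)$ is precisely $\mathfrak{s}(\mu)-\mathfrak{S}(\mu)B_W\vec u_y$ requires carefully tracking how the discrete Riesz lift $R_d(\mu)$ acts on $(w_d)_t\in Q_d$ — in coordinates $R_d(\mu)$ corresponds to $M_t^\psi$-weighted solves against $A_x(\mu)$ in space, producing the $M_x^{-1}A_x(\mu)$ and $(M_t^\psi)^{-1}$ factors — and similarly for the interaction of the $J_P$ and $K_M$ time bases through $Z_t$. The Kronecker identity $\mathcal G = P G_W P^T$ and the spectral comparison are routine once set up; the genuine work is the residual-in-coordinates identity, for which I would reuse the eliminate-$R_d$ derivation of $b_d$ given just after \eqref{eq:rieszapproximations}.
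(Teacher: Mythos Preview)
Your plan has the right overall shape but contains a genuine misidentification that breaks the spectral argument. The vector $\mathfrak{s}(\mu)-\mathfrak{S}(\mu)B_W\vec u_y$ is \emph{not} the coordinate vector $\vec r$ of the residual functional $r_d(\mu,\cdot)$. The matrix of $b_d(\mu;\cdot,\cdot)$ on $W_d$ is $G_W(\mu)=T_t\otimes M_x+M_t\otimes A_x(\mu)+Z_t^T(M_t^\psi)^{-1}Z_t\otimes M_xA_x(\mu)^{-1}M_x$, whereas $\mathfrak{S}(\mu)=(I_M\otimes A_x(\mu)M_x^{-1})G_W(\mu)$; consequently $\mathfrak{s}(\mu)-\mathfrak{S}(\mu)B_W\vec u_y=(I_M\otimes A_x(\mu)M_x^{-1})\vec r$. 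The conjugation that turns $G_W^{-1}$ into $\mathcal G^{-1}$ must therefore use $P_\mu:=I_M\otimes A_x(\mu)M_x^{-1}$, not $P_{\overline\mu}$; the hint in the paper explicitly says $A_x(\mu)$, and your ``specialized to $\overline\mu$'' reading is where things go astray.

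Because of this, your comparison ``$(P\vec r)^T\mathcal G^{-1}(P\vec r)$ versus $\vec r^T\mathcal G^{-1}\vec r$'' is not the one that is needed, and the spectral bound you propose is wrong on two counts. First, the smallest eigenvalue of $A_x(\overline\mu)M_x^{-1}$ corresponds to the Rayleigh quotient $\|v\|_V^2/\|v\|_H^2$ and has nothing to do with $\mathfrak c_c(\overline\mu)$ (which equals $1$ by definition of the reference norm). Second, even if that step were right, it could never produce a $\mu$-dependent factor $\mathfrak c_c(\mu)$; your closing ``combining with $\mathfrak c_c(\overline\mu)\ge c_c$ and the definition of $\mathfrak c_c(\mu)$'' is a non sequitur. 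The paper instead writes $\|\tilde r_d(\mu)\|_{W_d}^2=\hat{\mathfrak r}(\mu)^T(P_\mu G_W P_\mu^T)^{-1}\hat{\mathfrak r}(\mu)$ with $\hat{\mathfrak r}(\mu)=\mathfrak R\,\mathfrak r(\mu)$, expands $P_\mu G_W P_\mu^T$ as a Kronecker sum whose spatial factors contain $A_x(\mu)$, and then applies the inequality $\vec v^{\,T}A_x(\mu)^{-1}\vec v\le \mathfrak c_c(\mu)^{-1}\vec v^{\,T}A_x(\overline\mu)^{-1}\vec v$ twice (via $A_x(\mu)=A_x(\mu)A_x(\overline\mu)^{-1}A_x(\overline\mu)$ and the extremal eigenvalue of $A_x(\overline\mu)^{-1}A_x(\mu)$) to replace every $A_x(\mu)$ by $A_x(\overline\mu)$ at the cost of $\mathfrak c_c(\mu)^{-2}$. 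That is where the correct constant enters. Once you fix the identification $\hat{\mathfrak r}(\mu)=P_\mu\vec r$ and use the $\mu$-conjugation, the rest of your outline (insert into Theorem~\ref{thm:RBerror_ex}, transfer the side condition via $\RBestFineREL\le\RBestCcREL$) is fine.
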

\begin{proof}
Let $w_i$ denote the $i$-th basis function of $W_d$ and $I_{M} \in \RR^{M \times M}$ an identity matrix. We set 
\begin{align}
    \Vec{r} &:= \left(l_d(\mu; w_{i}) - b_d(\mu; y_{\texttt{rb}}(\mu), w_{i})\right)_{i = 1}^{NM} \in \RR^{NM},\\
    \hat{\mathfrak{r}}(\mu) &:= \left(I_{M} \otimes A_{x}(\mu)M_{x}^{-1}\right)\Vec{r} = \mathfrak{s}(\mu) - \mathfrak{S}(\mu) B_W \Vec{u}_y.\label{eq:residual_vec_to_assemble}
\end{align}
Then it follows that
\begin{align}
    \vert \vert \Tilde{r}_{d}(\mu) \vert \vert^{2}_{W_{d}} &= b_d(\overline{\mu}; \Tilde{r}_{d}(\mu), \Tilde{r}_{d}(\mu))\\
    &= \Vec{r}^T (Z_t^T (M_t^\psi)^{-1} Z_t \otimes M_{x}A_{x}^{-1}(\overline{\mu})M_{x} + M_{t} \otimes A_{x}(\overline{\mu}) + T_t\otimes M_x)^{-1}\Vec{r}\\
\begin{split}
    &= \hat{\mathfrak{r}}(\mu)^T \left(I_{M} \otimes A_{x}(\mu)M_{x}^{-1}\right)^{- T} (Z_t^T (M_t^\psi)^{-1} Z_t \otimes M_{x}A_{x}^{-1}(\overline{\mu})M_{x} \\&\qquad\qquad\qquad\qquad+ M_{t} \otimes A_{x}(\overline{\mu}) + T_t\otimes M_x)^{-1} \left(I_{M} \otimes A_{x}(\mu)M_{x}^{-1}\right)^{-1} \hat{\mathfrak{r}}(\mu) 
\end{split}
    \\
\begin{split}
    &= \hat{\mathfrak{r}}(\mu)^T (Z_t^T (M_t^\psi)^{-1} Z_t \otimes A_{x}(\mu) A_{x}^{-1}(\overline{\mu}) A_{x}(\mu) + M_{t} \otimes A_{x}(\mu) M_{x}^{-1} A_{x}(\overline{\mu}) M_{x}^{-1} A_{x}(\mu) \\&\qquad\qquad\qquad\qquad+ T_t \otimes A_x(\mu)M_x^{-1}A_x(\mu))^{-1}\hat{\mathfrak{r}}(\mu)
\end{split} 
    \\
\begin{split}\label{eq:estmofrieszresid}
    &\leq \hat{\mathfrak{r}}(\mu)^T (Z_t^T (M_t^\psi)^{-1} Z_t \otimes A_{x}(\overline{\mu}) + M_{t} \otimes A_{x}(\overline{\mu}) M_{x}^{-1} A_{x}(\overline{\mu}) M_{x}^{-1} A_{x}(\overline{\mu}) \\&\qquad\qquad\qquad\qquad+ T_t \otimes A_x(\overline{\mu})M_x^{-1}A_x(\overline{\mu}))^{-1}\hat{\mathfrak{r}}(\mu) (\mathfrak{c}_c(\mu))^{-2}.
\end{split} 
\end{align}
The inequality in \eqref{eq:estmofrieszresid}, where we estimate terms involving $\mu$ with corresponding terms involving the reference parameter $\overline{\mu}$, follows by replacing
\begin{align}
    A_x(\mu) = A_x(\mu)A_x(\overline{\mu})^{-1}A_x(\overline{\mu})
\end{align}
two times and applying the property
\begin{align}
    \Vec{v}^T \left(A_x(\mu)\right)^{-1} \Vec{v} &= \Vec{v}^T \left(A_x(\mu)A_x(\overline{\mu})^{-1}A_x(\overline{\mu})\right)^{-1} \Vec{v} \\&\leq \lambda_{\max}\left(\left(A_x(\mu)A_x(\overline{\mu})^{-1}\right)^{-1}\right) \Vec{v}^T A_x(\overline{\mu})^{-1} \Vec{v} \\
    &= \lambda_{\min}\left(A_x(\overline{\mu})^{-1} A_x(\mu)\right)^{-1} \Vec{v}^T A_x(\overline{\mu})^{-1} \Vec{v} \\
    &= \mathfrak{c}_c(\mu)^{-1} \Vec{v}^T A_x(\overline{\mu})^{-1} \Vec{v}
\end{align}
for any $\Vec{v} \in \RR^N$, where $\lambda_{\max}$ and $\lambda_{\min}$ denote the largest resp.~smallest eigenvalues. This yields the claim for the absolute estimator. 

Assuming $\RBestCcREL(\mu) \leq 1$, the assumptions of Theorem \ref{thm:RBerror_ex} are fulfilled and the claim for the relative estimator follows.
\end{proof}
\begin{remark} The estimation in \eqref{eq:estmofrieszresid} is fully offline-online decomposable analogously to the residual of elliptic PDEs \cite[§~4.2.5]{Hesthaven2016}. Furthermore, in the standard finite element setting of Remark \ref{rem:stiffnessTime} \textit{mass lumping} can be used for approximating $M_x^{-1}$ in practice. 
\end{remark}

\begin{remark}
To avoid the computation of a continuity constant, $\overline{\mu} \in \mathcal{P}$ can be chosen such that $\mathfrak{c}_c(\mu)\mathfrak{c}_s(\mu) \leq 1$ holds for all parameters $\mu$, which guarantees 
\begin{align}
    \alpha(\mu) = \mathfrak{c}_c(\mu) .
\end{align}
Alternatively, \eqref{eq:normsEnergyEqui2} can be used to estimate $\alpha(\mu)$ with $\alpha_\text{LB}$.
\end{remark}

In practice, the coercivity constant can be calculated by a (multi-parameter) min-theta-approach \cites[§4.2.2]{RozzaRB}[§~4.3.2]{Hesthaven2016} or the successive constraint method from \cite{CRMATH_2007__345_8_473_0} effectively.

\begin{remark}
From \eqref{eq:residual_vec_to_assemble} we can motivate that for the coefficient vector $\Vec{y}$ of the high fidelity solution it holds that
\begin{align}
    \mathfrak{S}(\mu)\Vec{y} = \mathfrak{s}(\mu) .
\end{align}
We also notice that $\mathfrak{S}(\mu)$ defines a \textit{discrete operator} on $W_d$ that is uniformly coercive and continuous. Additionally, as $\mathfrak{S}(\mu)$ is parameter-separable, the assumptions of \cite[Thm.~3.1]{ohlberger} are fulfilled and the convergence in the discrete $W_d$ norm follows. In particular we receive by \cite[Thm.~3.1]{ohlberger} that the Kolmogorov $\texttt{L}$-width decays exponentially, if the reduced basis spaces are constructed in a meaningful way (e.g.~POD). Therefore we can expect that the worst best-approximation error exponentially tends to zero for small $\texttt{L}$ already.
\end{remark}

\section{Numerical examples.}\label{subsec:numericsRB}
We show the performance of the reduced basis approach with a POD-greedy algorithm \cites{SIENA2023127}{haasdonkCONVrb} for two example problems. With $I_k$ we denote the time grid on $I = (0,T]$, and $\Omega_h$ denotes a conforming triangulation on the spatial domain $\Omega$. Furthermore, we denote with $d$ the space-time mesh size parameter composed of the time grid size $k$ of $I_k$ and the space grid width $h$ of $\Omega_h$ according to $d^2 = k^2 + h^2$. We choose $V_N$ in \eqref{VN} as the space of piecewise linear and globally continuous finite elements (CG~1) defined over $\Omega_h$, and use mass lumping for the associated mass matrix as described in \cite[§~5.1.2, (1.20)]{GrossmannRoos}. With respect to time we take piecewise linear and globally continuous elements for $K_M$ in \eqref{KM} and piecewise constant and discontinuous elements (DG~0) for $J_P$ in \eqref{JP}. An efficient preconditioner for the solution of the corresponding linear systems is provided in \cite{preprint_SP}.

We run our simulations on a machine with an \texttt{AMD Ryzen Threadripper PRO 5995WX} CPU and 512 GB RAM. We use the \texttt{IPython} interpreter \cite{PER-GRA:2007} with \texttt{dolfinx} \cites{BarattaEtal2023}{BasixJoss}{ScroggsEtal2022}{AlnaesEtal2014} for assembling the finite element matrices. The linear systems and eigenvalue problems are solved using the \texttt{SciPy} library \cite{2020SciPy-NMeth} and \texttt{PETSc} \cites{petsc1}{petsc2}.

We consider two numerical examples below. The first is a standard \textit{thermal block} problem adapted from the literature, and the second is an example with minimal regularity.

\paragraph*{Example 1: Two-dimensional thermal block problem.} Thermal block problems are investigated e.g.~in \cites[§~2.2.1, §~3.5.2]{RozzaRB}[§~2.3.1]{haasdonk2017reduced}{Rozza2008}[§~6.1.4]{Hesthaven2016}. In the following we use the parabolic formulation from \cite{Hesthaven2016}. We aim at comparing the absolute error estimators from Theorems \ref{thm:RBerror_ex} and \ref{thm:RB_error_est_cc_sq}.\\

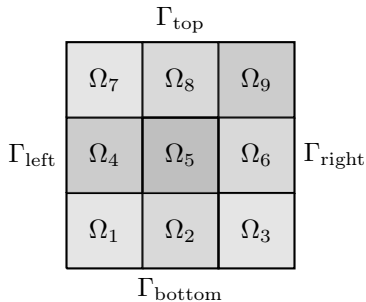
\begin{figure}%
    \centering
    \begin{tikzpicture}[x=0.5cm,y=0.5cm]
\coordinate (A) at (0, 0);
\coordinate (B) at (4, 0);
\coordinate (C) at (4, 4);
\coordinate (D) at (0, 4);

\coordinate (E) at (0, 2);
\coordinate (F) at (2, 0);
\coordinate (G) at (4, 2);
\coordinate (H) at (2, 4);
\coordinate (I) at (2, 2);

\coordinate (J) at (6, 0);
\coordinate (K) at (6, 2);
\coordinate (L) at (6, 4);
\coordinate (M) at (6, 6);
\coordinate (N) at (4, 6);
\coordinate (O) at (2, 6);
\coordinate (P) at (0, 6);

\draw[thick, black, fill=gray, fill opacity=0.2] (A) -- (F) -- (I) -- (E) -- (A);
\draw[thick, black, fill=gray, fill opacity=0.3] (I) -- (G) -- (B) -- (F);
\draw[thick, black, fill=gray, fill opacity=0.4] (I) -- (H) -- (D) -- (E);
\draw[thick, black, fill=gray, fill opacity=0.5] (I) -- (H) -- (C) -- (G);
\draw[thick, black, fill=gray, fill opacity=0.2] (D) -- (P) -- (O) -- (H);
\draw[thick, black, fill=gray, fill opacity=0.3] (O) -- (N) -- (C) -- (H);
\draw[thick, black, fill=gray, fill opacity=0.4] (N) -- (M) -- (L) -- (C);
\draw[thick, black, fill=gray, fill opacity=0.3] (L) -- (K) -- (G) -- (C);
\draw[thick, black, fill=gray, fill opacity=0.2] (K) -- (J) -- (B) -- (G);


\node[] at (1, 1) {$\Omega_1$};
\node[] at (3, 1) {$\Omega_2$};
\node[] at (5, 1) {$\Omega_3$};

\node[] at (1, 3) {$\Omega_4$};
\node[] at (3, 3) {$\Omega_5$};
\node[] at (5, 3) {$\Omega_6$};

\node[] at (1, 5) {$\Omega_7$};
\node[] at (3, 5) {$\Omega_8$};
\node[] at (5, 5) {$\Omega_9$};

\node[anchor=south] at (3,6) {$\Gamma_\text{top}$};
\node[anchor=north] at (3,0) {$\Gamma_\text{bottom}$};
\node[anchor=east] at (0,3) {$\Gamma_\text{left}$};
\node[anchor=west] at (6,3) {$\Gamma_\text{right}$};

\end{tikzpicture}
    \caption{Sketch of $\Omega$ and its subdomains and boundaries in the thermal block example.}
    \label{fig:sketchEXNM}
\end{figure}

The domain $\Omega:= (0,1)^2$ is subdivided into nine equally sized subdomains $\Omega_1, ..., \Omega_9$,  as illustrated in Figure~\ref{fig:sketchEXNM}.
To the subdomains $\Omega_p$, $p=1,...,8$, we assign the diffusivity constant $\mu_p$ $(p=1,...,8)$, resulting in the parameter vector $(\mu_p)_{p=1}^8$. We consider \eqref{eq:parab_mu} with $y_0=0$, where the bilinear form in \eqref{eq:RBbilinA} is given by
\begin{equation}
    a(\mu;u,v) = \sum_{p=1}^8 \mu_p \int_{\Omega_p} \nabla u \cdot \nabla v + \int_{\Omega_9} \nabla u \cdot \nabla v .
\end{equation}
On $\Gamma_\text{top}$ homogeneous Dirichlet and on $\Gamma_\text{left}$ and $\Gamma_\text{right}$ homogeneous Neumann boundary conditions are prescribed. On $\Gamma_\text{bottom}$ we consider parameterized Neumann data, yielding the right hand side
\begin{equation}
    f(\mu; w) = \mu_9 \int_0^T \int_{\Gamma_\text{bottom}} w
\end{equation}
in \eqref{eq:parab_mu}.
As in \cite{Hesthaven2016} we set $\mathcal{P} = [0.1,10]^8 \times [-1,1]$ and consider $\mu := ((\mu_p)_{p=1}^8,\mu_9)\in \mathcal P$. The training set $\mathcal{S}_{\text{train}}$ consists of $5000$ randomly chosen parameters from $\mathcal P$, where $(\mu_p)_{p=1}^8$ are taken from a uniform distribution on a log scale, and where $\mu_9$ is taken from a uniform distribution on a linear scale. This is motivated in \cites[§3.5.2]{RozzaRB}. As reference parameter we take $\overline{\mu} = (1,...,1)^T$. We set $T=3$ and use a time grid with stepsize  $k=0.05$, yielding $M = 60$ and $P=59$. In space we a choose an equidistant, symmetric and regular triangulation $\Omega_h$ of $\Omega$ with 22 vertices in each spatial direction, yielding $N = 22^2 = 484$.\\

We use the POD-greedy approach from Algorithm \ref{greedy} with $\texttt{L}_1 = 1$, $\texttt{L}_2 = 2$ together with the estimator $\RBestCcABS$ from \eqref{eq:rbabsestnew}. To generate a sequence of reduced basis spaces $W_\texttt{L}$, $Q_\texttt{L}$,
we solve the reduced system \eqref{eq:reduziertesproblem} for all unused parameters in the training set $\mathcal{S}_{\text{train}}$. We then evaluate the error estimator $\RBestCcABS$ for all reduced solutions
and choose iteratively $L_2$ parameters that maximize the estimator. We solve the high fidelity problem \eqref{eq:hfproblemSP} for those parameters. The reduced basis space then is enriched by these high fidelity solutions and applying POD, see Algorithm \ref{greedy}.

For comparison, we generate a validation set of 20 randomly chosen parameters, not included in $\mathcal{S}_{\text{train}}$, and evaluate the error estimators $\RBestCcABS$ and $\RBestFineABS$ as well as the true error in each iteration. The computational costs of the true error evaluation limits the size of the validation set. To allow a better assessment of the results, we also compute the effectivities of the estimators. For a given estimator $\eta$ we denote by
\begin{equation}
    \eff (\eta):= \frac{\eta}{\epsilon^\text{abs}}
\end{equation}
its effectivity, which we calculate for all parameters in the validation set. \\

In Figure~\ref{fig:rberrorNM} (left) we show the average error on the validation set in dependence of the reduced basis space dimension $\texttt{L}$ in a semi-logarithmic plot. In Figure~\ref{fig:rberrorNM} (right) we show the average effectivities for the two estimators $\RBestFineABS$ and $\RBestCcABS$ against $\texttt{L}$.

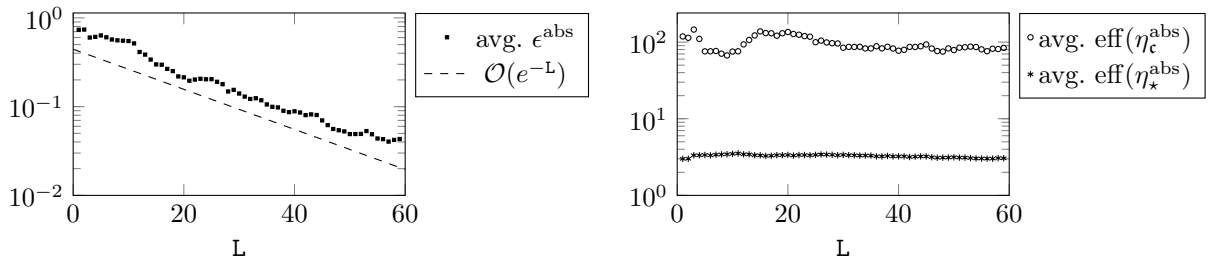
\begin{figure}
    \centering
    \begin{tikzpicture} 
\pgfplotstableread[col sep = comma]{RB/resultsNM.csv}\tabNMa;

\begin{axis}
[
width=0.35\textwidth,
height=4cm,
xlabel={$\texttt{L}$},
ylabel={ },
xmin = 0,
xmax = 60,
ymin = 0.01,
ymode =log,
xmode=normal,
legend pos = outer north east,
]




\addplot[only marks, mark=square*, mark size=0.6pt] table [x index = 0, y index = 1] from \tabNMa;
\addlegendentry{avg.~$\epsilon^\text{abs}$}

\addplot[dashed,samples =  3, domain=0:60]{0.7308*e^(-0.052*x - 0.5)};
\addlegendentry{$\mathcal{O}(e^{-\texttt{L}})$}

\end{axis}

\end{tikzpicture} \, \begin{tikzpicture} 
\pgfplotstableread[col sep = comma]{RB/resultsNM.csv}\tabNMa;

\begin{axis}
[
width=0.35\textwidth,
height=4cm,
xlabel={$\texttt{L}$},
ylabel={ },
xmin = 0,
xmax = 60,
ymin = 1,
ymode =log,
xmode=normal,
legend pos = outer north east,
]

\addplot[only marks, mark=o, mark size=1pt] table [x index = 0, y index = 2] from \tabNMa;
\addlegendentry{avg.~$\eff(\RBestCcABS)$}

\addplot[only marks, mark=asterisk, mark size=1.2pt] table [x index = 0, y index = 3] from \tabNMa;
\addlegendentry{avg.~$\eff(\RBestFineABS)$}

\end{axis}

\end{tikzpicture}
    \caption{\textit{Left:} Absolute error of the reduced basis space $W_\texttt{L}$, as evaluated on a validation set, plotted against the dimension $\texttt{L}$ of the reduced basis space. As in a classical reduced basis approach, we observe exponential decay. \textit{Right:} Effectivities of the error estimators $\RBestFineABS$ (from Theorem \ref{thm:RBerror_ex}) and $\RBestCcABS$ (from Theorem \ref{thm:RB_error_est_cc_sq}), computed on the validation set, too. The estimator $\RBestFineABS$, which evaluates the true residual, produces the most precise results due to effectivities close to 1, whereas the estimator $\RBestCcABS$ uses an estimation of the residual.}
    \label{fig:rberrorNM}
\end{figure}

As in a classical reduced basis approach for elliptic PDEs, we observe exponential decay in the errors. We stress that the error is measured with respect to the full validation set that represents the whole parameter space $\mathcal{P}$. For an investigation of the maximal predicted error for single parameters $\mu$ we refer to the next example. In Figure~\ref{fig:rberrorNM} we also see that the estimator $\RBestFineABS$, involving the exact residual, shows effectivities close to 1, which is the optimal value. The estimator $\RBestCcABS$ performs slightly worse. However, results in this order are very comparable to those reported in \cite{Hesthaven2016}. We mention, that we certify in the stronger $W_d$ norm, which also considers the time derivative in $L^2(0,T;V^\star)$. The latter work provides a certification in $L^2(0,T;V)$. The effectivities and required reduced basis dimensions for a desired error reduction are still in a similar range, although the stronger norm is used in the present work. \\

In conclusion, the estimator $\RBestFineABS$ gives more accurate estimations than $\RBestCcABS$. However, $\RBestFineABS$ comes at higher computational costs than the other estimator. In practical applications a compromise might be to use different estimators for parameter selection and a posteriori certification, which we show in the next example.

\paragraph*{Example 2: A three-dimensional problem with minimal regularity.} We now apply the reduced basis approach to an example in three spatial dimensions with minimal regularity. This enables us to study the practicality of error estimators and computational time for a more complex problem. 
We use $I=(0,1]$ and set
\begin{align}
    \Omega &:= \left( (0,1)^2 \setminus ([0.5, 1] \times [0, 0.5])\right)\times (0, 0.5), \\
    \Omega_1 &:= \lbrace x \in \Omega \; \vert \; (x_1-0.25)^2 + (x_2-0.25)^2 < 0.2^2, \; x_3 < 0.2 \rbrace , \\
    \Omega_2 &:= \lbrace x \in \Omega \; \vert \; (x_1-0.25)^2 + (x_2-0.75)^2 < 0.2^2, \; x_3 < 0.2 \rbrace , \\
    \Omega_3 &:= \lbrace x \in \Omega \; \vert \; (x_1-0.75)^2 + (x_2-0.75)^2 < 0.2^2, \; x_3 < 0.2 \rbrace , \\
    \Omega_0 &:= \Omega \setminus (\Omega_1 \cup \Omega_2 \cup \Omega_3) .
\end{align}
The situation is sketched in Figure \ref{fig:sketchEX2}. 
\begin{figure}
    \centering
    \def\rotx{40}
\def\rotz{-15}
\tdplotsetmaincoords{\rotx}{\rotz}
\begin{tikzpicture}[scale=3,tdplot_main_coords]

  \coordinate (A) at (0,0,0);
  \coordinate (B) at (0.5,0,0);
  \coordinate (C) at (0.5,0.5,0);
  \coordinate (D) at (1,0.5,0);
  \coordinate (E) at (1,1,0);
  \coordinate (F) at (0,1,0);

  \coordinate (G) at (0,0,0.5);
  \coordinate (H) at (0.5,0,0.5);
  \coordinate (I) at (0.5,0.5,0.5);
  \coordinate (J) at (1,0.5,0.5);
  \coordinate (K) at (1,1,0.5);
  \coordinate (L) at (0,1,0.5);
  
  \coordinate (M) at (0.25,0.25,0);
  \coordinate (N) at (0.25,0.75,0);
  \coordinate (O) at (0.75,0.75,0);

  \coordinate (P) at (0.25,0.25,0.2);
  \coordinate (Q) at (0.25,0.75,0.2);
  \coordinate (R) at (0.75,0.75,0.2);

  \draw[black, fill=black, fill opacity=0.1] (A) -- (B) -- (C) -- (D) -- (E) -- (F) -- cycle;

  \def\radcirc{0.2}
  \def\cylheight{0.2}
  
  \draw [thick, clUKfb3, fill=white, fill opacity=0.7](M) circle (\radcirc);
  \draw [thick, clUKfb3, fill=white, fill opacity=0.7](N) circle (\radcirc);
  \draw [thick, clUKfb3, fill=white, fill opacity=0.7](O) circle (\radcirc);
  \draw [thick, clUKfb3, fill=white, fill opacity=0.5](P) circle (\radcirc);
  \draw [thick, clUKfb3, fill=white, fill opacity=0.5](Q) circle (\radcirc);
  \draw [thick, clUKfb3, fill=white, fill opacity=0.5](R) circle (\radcirc);
  
  \pgfcoordinate{edge1_top}{ \pgfpointcylindrical{\rotz}{\radcirc}{\cylheight}};
  \pgfcoordinate{edge1_bottom}{ \pgfpointcylindrical{\rotz}{\radcirc}{0}};
  \draw[thick, clUKfb3] ($(edge1_top) + (M)$) -- ($(edge1_bottom) + (M)$);
  \draw[thick, clUKfb3] ($(edge1_top) + (N)$) -- ($(edge1_bottom) + (N)$);
  \draw[thick, clUKfb3] ($(edge1_top) + (O)$) -- ($(edge1_bottom) + (O)$);
  \pgfcoordinate{edge2_top}{ \pgfpointcylindrical{\rotz+180}{\radcirc}{\cylheight} };
  \pgfcoordinate{edge2_bottom}{ \pgfpointcylindrical{\rotz+180}{\radcirc}{0} };
  \draw[thick, clUKfb3] ($(edge2_top) + (M)$) -- ($(edge2_bottom) + (M)$);
  \draw[thick, clUKfb3] ($(edge2_top) + (N)$) -- ($(edge2_bottom) + (N)$);
  \draw[thick, clUKfb3] ($(edge2_top) + (O)$) -- ($(edge2_bottom) + (O)$);

  \draw[dashed] (G) -- (H) -- (I) -- (J) -- (K) -- (L) -- cycle;
  \draw[dashed] (A) -- (G);
  \draw[dashed] (B) -- (H);
  \draw[dashed] (C) -- (I);
  \draw[dashed] (D) -- (J);
  \draw[dashed] (E) -- (K);
  \draw[dashed] (F) -- (L);

  \draw[] (-0.25,0.25) node[text=clUKfb3] {$\Omega_1$};
  \draw[] (-0.25,0.75) node[text=clUKfb3] {$\Omega_2$};
  \draw[] (1.25,0.75) node[text=clUKfb3] {$\Omega_3$};
  \draw[] (0.75,0.25) node[text=black, opacity=0.4] {$\Omega_0$};
  \draw[] (0.5,1.25) node[text=black] {$\Omega$};
\end{tikzpicture}
    \caption{Sketch of the domain $\Omega$ and their subdomains $\Omega_0$, $\Omega_1$, $\Omega_2$, $\Omega_3$ in the second example.}
    \label{fig:sketchEX2}
\end{figure}
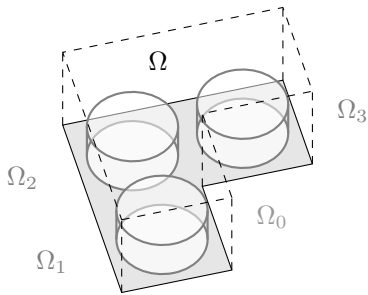
We consider \eqref{eq:parab_mu} with $y_0=0$ and the bilinear form in \eqref{eq:RBbilinA} given by
\[a(\mu;u,v):= \int_\Omega \kappa(\mu)\nabla u \nabla v,
\]
where the diffusion coefficient is defined as
\begin{align}
    \kappa(\mu;\cdot) = \mathbf{1}_{\Omega_0} + \mu_1\mathbf{1}_{\Omega_1} + \mu_2\mathbf{1}_{\Omega_2} + \mu_3\mathbf{1}_{\Omega_3}.
\end{align}
Here, $\mathbf{1}_{\Omega_i}$ denotes the indicator function of the set $\Omega_i$ $(i=1,2,3)$.
As source term we take the space-time separable function $f(\mu) = f^tf^x(\mu) \in L^2(0,T;V^\star)$ with
\begin{align*}
\label{eq:bsprechteseite0}
    f^t(t) := \begin{cases}1, & \text{if } t \leq 0.5, \\ 0, & \text{else},
    \end{cases}
\end{align*}
and
\begin{align*}
    \langle f^x(\mu), v \rangle_{V^\star, V} := \int_\Omega (\mu_4 \mathbf{1}_{\Omega_1} + \mu_5 \mathbf{1}_{\Omega_2} + \mu_6 \mathbf{1}_{\Omega_3})v_{x_{1}} \qquad \forall v \in V.
\end{align*}

For the parameters we take $((\mu_1,\mu_2,\mu_3),(\mu_4,\mu_5,\mu_6)) \in \mathcal{P}:= [0.25, 4]^3 \times [1,3]^3$ and set $\mu:= (\mu_1,\ldots,\mu_6)^T$.
We choose $S_{\text{train}} \subset \mathcal{P}$ as the Cartesian product of three logarithmically-spaced grids with ten grid points in each set, as suggested in \cites[§3.5.2]{RozzaRB}, and three linearly-spaced grids of $[1,3]$ with three points in each direction, hence $\vert S_{\text{train}} \vert = 10^3\cdot 3^3 = 27000$. Again we take $\overline{\mu} = (1,...,1)^T$. We furthermore choose $M = 16$ and a spatial mesh with $N = 1761$ vertices, where the mesh is generated using \texttt{Gmsh} \cite{Geuzaine2009-tj}. For resolving the discontinuities in $f$ and $\kappa$, we work with a triangulation that considers the subdomains. As in Example 1 we take $\texttt{L}_1 = 1$ and $\texttt{L}_2 = 2$.\\

We compare the absolute and relative error estimators $\RBestFineABS$ resp.~$\RBestFineREL$ with the true absolute and relative errors. For that we use the estimators $\RBestCcABS$ resp.~$\RBestCcREL$ to select parameters and $\RBestFineABS$ resp.~$\RBestFineREL$ for the certification. In each iteration we compute the true absolute and relative error for the selected parameter by evaluating the full $W_d$ norm. The results are shown in Figure \ref{fig:rberror2}.

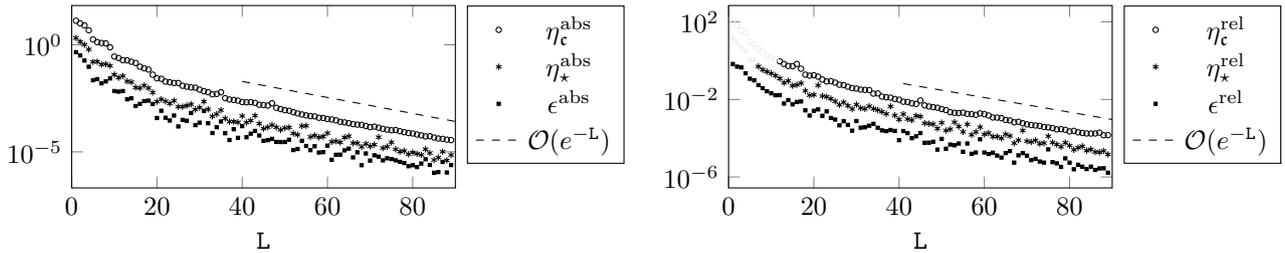
\begin{figure}
    \centering
    \begin{tikzpicture} 
\pgfplotstableread[col sep = comma]{RB/results3D.csv}\tabRBb;

\begin{axis}
[
width=0.39\textwidth,
height=4cm,
xlabel={$\texttt{L}$},
ylabel={ },
xmin = 0,
xmax = 90,
ymode =log,
xmode=normal,
legend pos = outer north east,
]

\addplot[only marks, mark=o, mark size=1pt] table [x index = 0, y index = 2] from \tabRBb;
\addlegendentry{$\RBestCcABS$}

\addplot[only marks, mark=asterisk, mark size=1.2pt] table [x index = 0, y index = 3] from \tabRBb;
\addlegendentry{$\RBestFineABS$}

\addplot[only marks, mark=square*, mark size=0.6pt] table [x index = 0, y index = 1] from \tabRBb;
\addlegendentry{$\epsilon^\text{abs}$}

\addplot[dashed,samples =  3, domain=40:100]{0.067*e^(-0.086*x + 2.2)};
\addlegendentry{$\mathcal{O}(e^{-\texttt{L}})$}

\end{axis}

\end{tikzpicture} \, \begin{tikzpicture} 
\pgfplotstableread[col sep = comma]{RB/results3D.csv}\tabRBb;

\begin{axis}
[
width=0.39\textwidth,
height=4cm,
xlabel={$\texttt{L}$},
ylabel={ },
xmin = 0,
xmax = 90,
ymode =log,
xmode=normal,
legend pos = outer north east,
]

\addplot[only marks, mark=o, mark size=1pt] table [x index = 0, y index = 6] from \tabRBb;
\addlegendentry{$\RBestCcREL$}

\addplot[only marks, mark=asterisk, mark size=1.2pt] table [x index = 0, y index = 8] from \tabRBb;
\addlegendentry{$\RBestFineREL$}

\addplot[only marks, mark=square*, mark size=0.6pt] table [x index = 0, y index = 4] from \tabRBb;
\addlegendentry{$\epsilon^\text{rel}$}

\addplot[dashed,samples=3, domain=41:100]{0.2563*e^(-0.087*x + 2.2)};
\addlegendentry{$\mathcal{O}(e^{-\texttt{L}})$}

\addplot[only marks, mark=asterisk, mark size=1.2pt, color=.!25!white] table [x index = 0, y index = 7] from \tabRBb;

\addplot[only marks, mark=o, mark size=1pt, color=.!25!white] table [x index = 0, y index = 5] from \tabRBb;

\end{axis}

\end{tikzpicture}
    \caption{Comparison of the error estimators $\RBestCcABS$ and $\RBestFineABS$ (left) resp.~$\RBestCcREL$ and $\RBestFineREL$ (right) with the real errors $\epsilon^\text{abs}$ resp.~$\epsilon^\text{rel}$ in dependence of the number $\texttt{L}$ of used basis functions for Example 2. The parameter is freshly selected by $\RBestCcABS$ resp.~$\RBestCcREL$, and is different in every step. We plot the error for the parameter that maximizes the latter error estimators. The region where the assumption $\RBestCcABS \leq 1$ resp.~$\RBestFineREL \leq 1$ from \eqref{eq:RB_first_rel_est} is not satisfied is plotted in gray.}\label{fig:rberror2}
\end{figure}

Similar to a classical reduced basis approach, we observe an exponential decay of the absolute and relative errors and estimators. As expected, there are no compromises in the certification quality of the error estimators, even though this example is posed in three spatial dimensions and only minimal regularity assumptions are fulfilled.\\

To get an impression of the most relevant information that POD provides, we examine the first three basis functions it produces, cutting the three-dimensional spatial domain at $z = 0.1$ and focusing on the lower part only. Figure \ref{fig:basisEX2} shows the basis functions at time $t = 0.4$.

\begin{figure}
    \centering
    \includegraphics[width=0.25\textwidth]{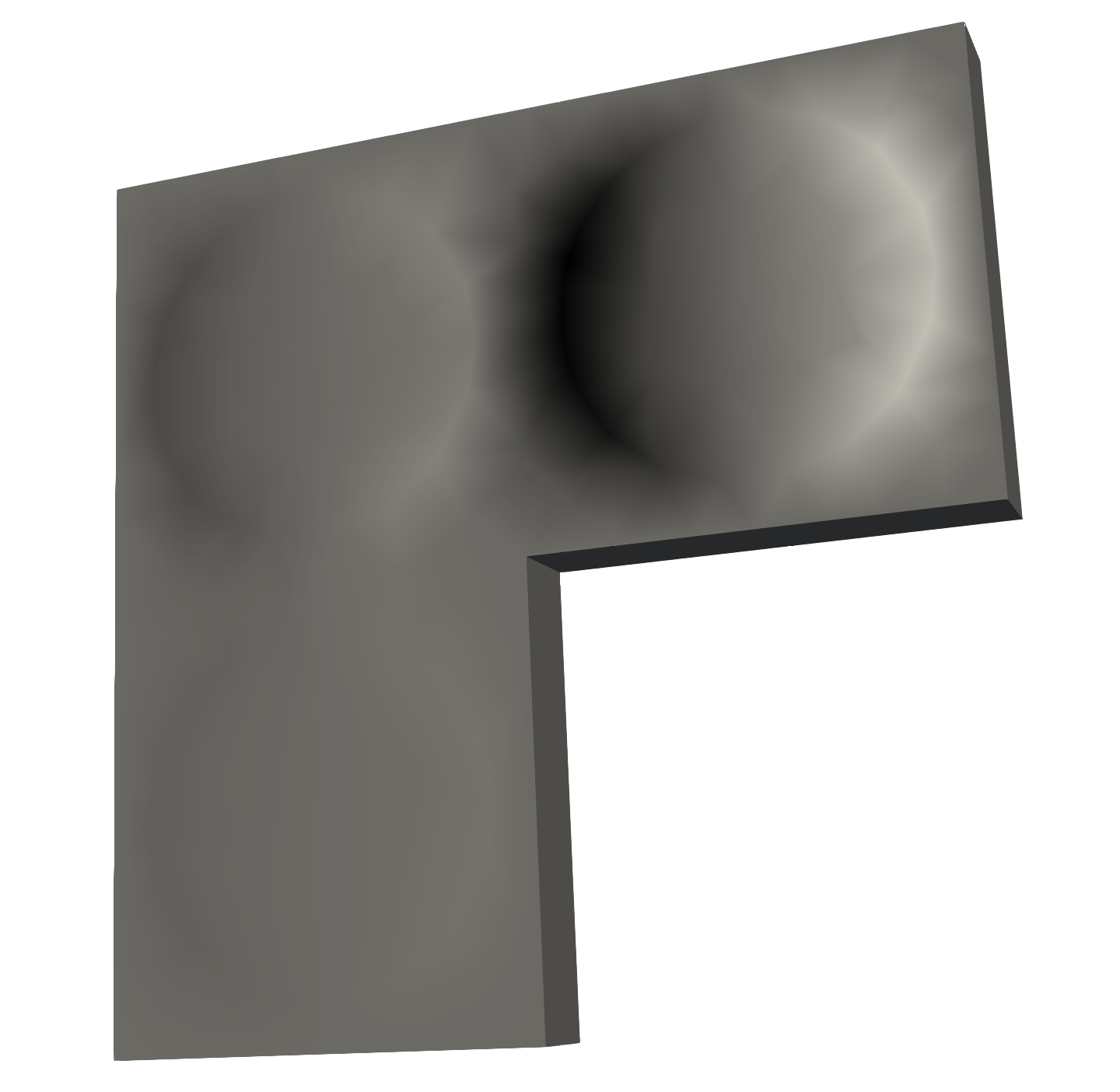}
    \includegraphics[width=0.25\textwidth]{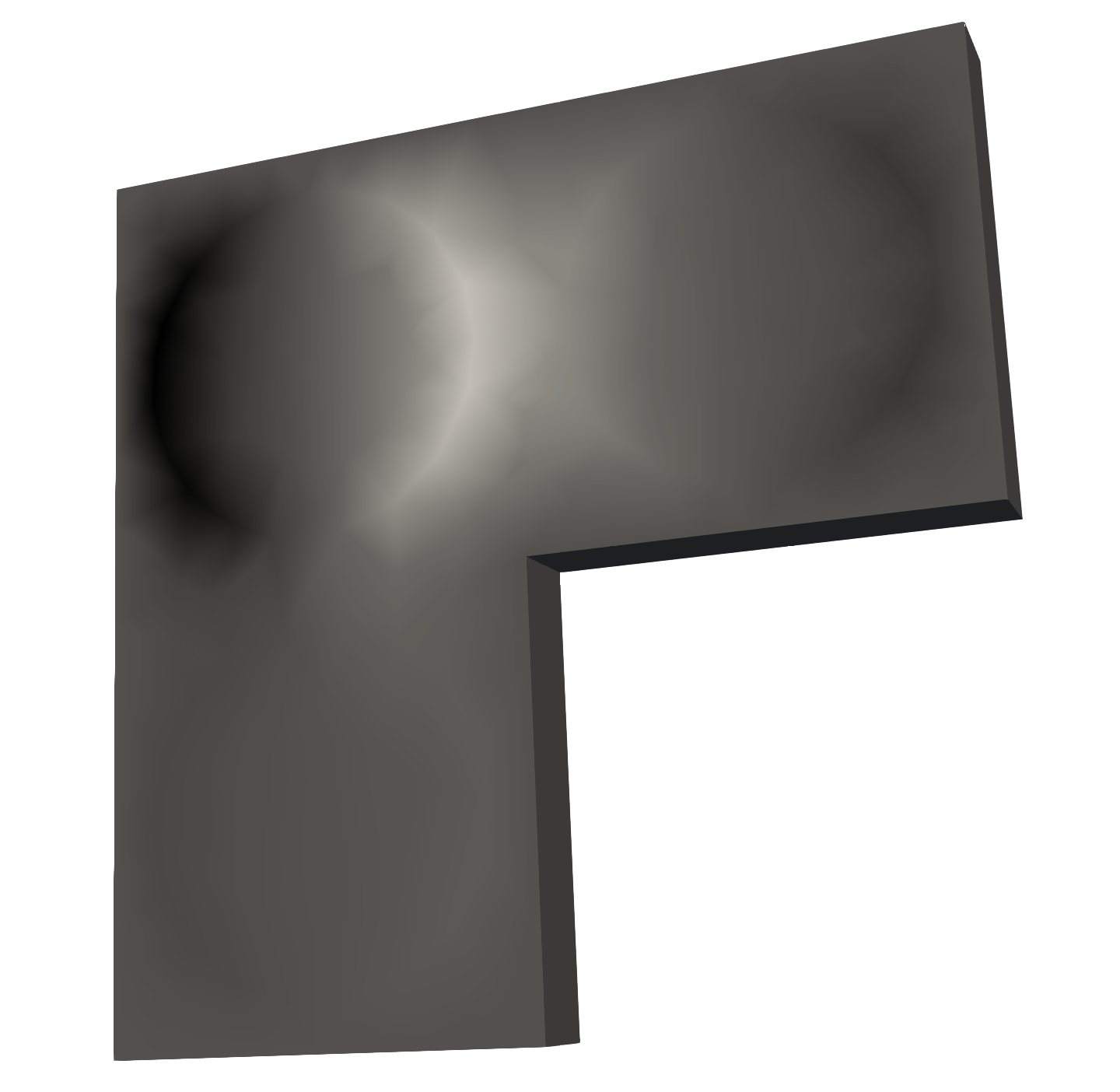}
    \includegraphics[width=0.25\textwidth]{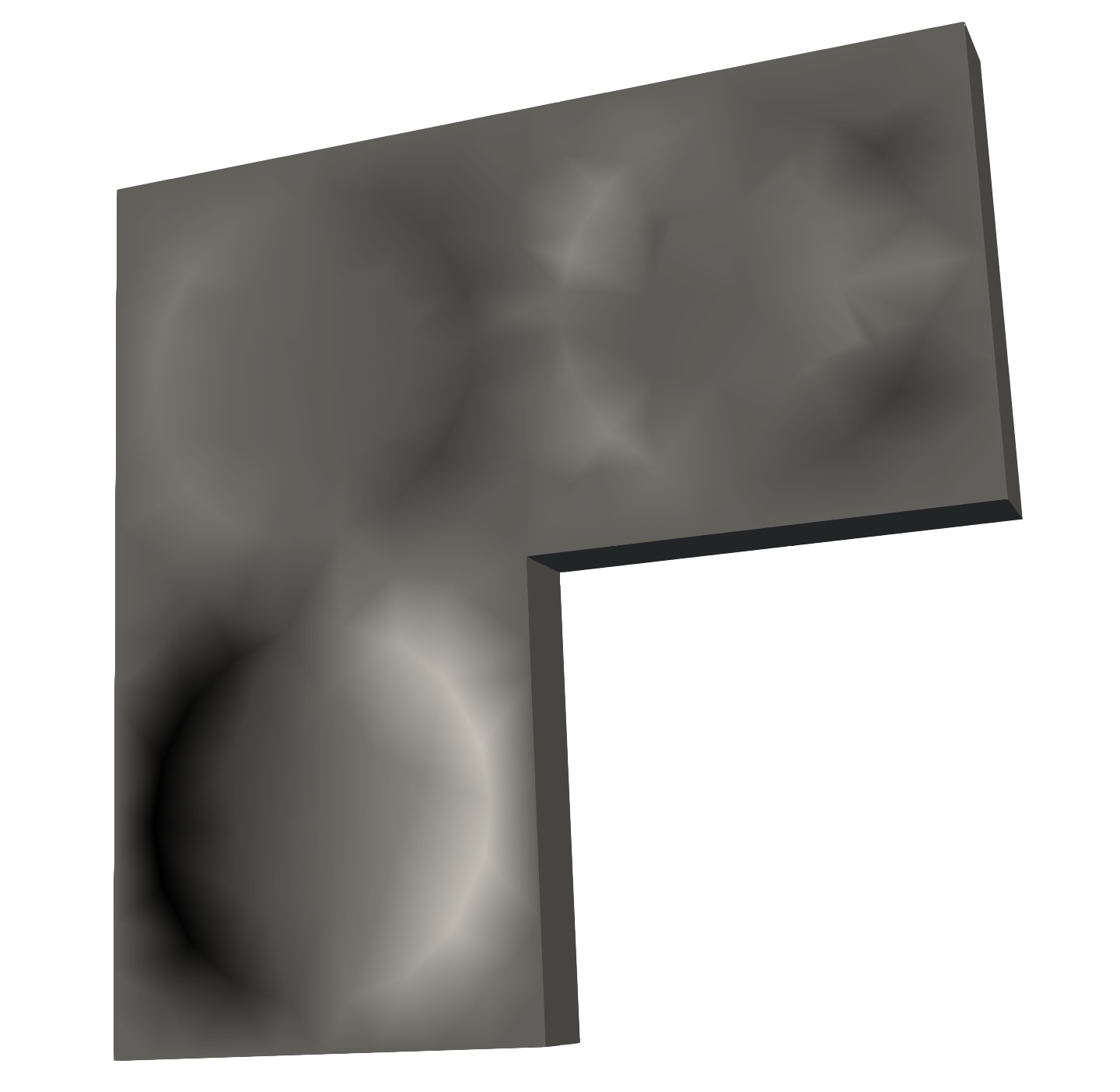}
    \caption{The first three basis functions $\xi_1$, $\xi_2$, $\xi_3$ generated by Algorithm \ref{greedy} for the second example. The domain is cut at $z = 0.1$ and the functions are plotted at $t = 0.4$. We choose a different color scaling for each function in order to highlight the differences and illustrate the most important information. White means small values, black means large values.}\label{fig:basisEX2}
\end{figure}

We see that the most pronounced structure is located at the boundaries of the subdomains $\Omega_1$, $\Omega_2$ and $\Omega_3$, where the jump discontinuities of diffusivity and heat source are located. Also, the direction of the POD-oscillation is in $x_1$-direction which can be expected by the choice of $f^x$. \\

Additionally, we measure and compare the computational times of the offline and online phases to a full system solve without the reduced basis approach, in order to validate that the reduced basis approach saves time. The measured computation times are shown in Figure \ref{fig:calculation_time}.

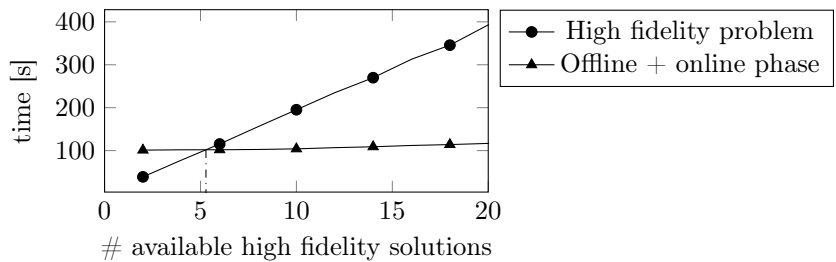
\begin{figure}
    \centering
    \begin{tikzpicture} 
\pgfplotstableread[col sep = comma]{RB/calc_times.csv}\tabCalcTimes;

\begin{axis}
[
width=0.39\textwidth,
height=4cm,
xlabel={\# available high fidelity solutions},
ylabel={time [s]},
xmin = 0,
xmax = 20, 
ymode =normal,
xmode=normal,
legend pos = outer north east,
]

\addplot[thin, mark=*, mark repeat=2, mark phase=0, mark size=2pt] table [x index = 0, y index = 1] from \tabCalcTimes;
\addlegendentry{High fidelity problem}

\addplot[thin, mark=triangle*, mark repeat=2, mark phase=0, mark size=2.2pt] table [x index = 0, y index = 2] from \tabCalcTimes;
\addlegendentry{Offline + online phase}


\draw[dashdotted] (axis cs:5.285530332,-100) -- (axis cs:5.285530332,101.9163919);

\end{axis}

\end{tikzpicture}
    \caption{We compare the computation times for high-fidelity system solutions that are computed and used for the reduced basis approach. For the latter time measurement, we consider the computation times of the offline and online phases, in which the previous solutions are stored in $\mathcal{Z}$. We observe that the additional cost in the offline and online phase pays off after six system solves already.}\label{fig:calculation_time}
\end{figure}

We highlight that the time-measurements strongly depend on the implementation. Here, we are interested in a proof of concept and therefore use the standard routines without exploiting parallel computation in all programming steps. Figure \ref{fig:calculation_time} should therefore be seen in a qualitative fashion. We observe that the runtime of the full problem is more expensive than the reduced basis approach from six high fidelity solves onward. To increase the accuracy of the reduced basis approach, we store the previously computed high fidelity solutions in the set $\mathcal{Z}$, so that they are available for Algorithm \ref{greedy}. As POD is applied on the full set $\mathcal{Z}$, the computation time of the offline phase increases slightly with more solutions available. Furthermore, we increase the reduced basis space dimension $\texttt{L}$ in each step to obtain a higher accuracy. Therefore, building the matrix $\mathfrak{G}$ in \eqref{eq:residualRBotherpart} as well as $B_Q$ in the offline phase also becomes more expensive with larger $\texttt{L}$. However, we see that after six high fidelity system solves, any additional system solve is more costly than running the offline phase and full online phase.

\section{Conclusion.}
In this work we propose a POD-greedy reduced basis method for parabolic equations based on the least squares space-time approach from \cite{preprint_SP}. We adapt this formulation to a parameter-dependent setting. The resulting variational formulation gives rise to a uniformly coercive and continuous bilinear form, for which well-known reduced basis techniques for parametrized elliptic equations can be applied. For certification we propose absolute and relative error estimators in a discrete $W(0,T)$ norm, which can be efficiently evaluated using standard offline–online decomposition techniques. We illustrate the performance of our approach using two numerical examples.

\section*{Statements and Declarations.}

\subsubsection*{Conflict of Interest.}
The authors declare that they have no conflict of interest.

\subsubsection*{Funding.} 
The first author acknowledges funding of the project \textit{Ein nichtglatter Phasenfeld Zugang für Formoptimierung mit instationären Fluiden} by the German Research foundation within the Priority Programme 1962 under project number \href{https://gepris.dfg.de/gepris/projekt/423457678}{423457678}.

The first and second author acknowledge funding of the project \textit{Fluiddynamische Formoptimierung mit Phasenfeldern und Lipschitz-Methoden} by the German Research Foundation under project number \href{https://gepris.dfg.de/gepris/projekt/543959359}{543959359}.

\subsubsection*{Author Contribution.}
(CRediT taxonomy)
\begin{description}
    \item [M.H.] Conceptualization, Formal analysis, Funding acquisition, Methodology, Project Administration, Resources, Supervision, Validation, Writing – Original Draft Preparation, Writing – review \& editing;
    \item [C.K.] Conceptualization, Formal analysis, Funding acquisition, Methodology, Project Administration, Resources, Supervision, Validation, Writing – Original Draft Preparation, Writing – review \& editing;
    \item [M.S.] Conceptualization, Data curation, Formal analysis, Investigation, Methodology, Software, Visualization,  Validation, Writing – Original Draft, Writing – review \& editing;
\end{description}

\printbibliography

\end{document}